\numberwithin{equation}{section}
\renewcommand{\Im}{\operatorname{Im}}
\definecolor{blue1}{rgb}{0.0, 0.0, 1.0}
\definecolor{gray}{rgb}{0.9,0.9,0.9}
\definecolor{gray1}{rgb}{0.7,0.7,0.7}
\definecolor{gray2}{rgb}{0.8,0.8,0.8}
\definecolor{magenta}{rgb}{1.0, 0.0, 1.0}
\definecolor{purple}{rgb}{1.0, 0.2, .6}
 \journalname{Theory in Biosciences}
\begin{document}

\title{ Dynamics of neural fields with exponential temporal kernel%\thanks{thanks.}
}
\subtitle{}

\author{Elham Shamsara \and Marius E. Yamakou \and Fatihcan M. Atay \and J\"{u}rgen Jost
}

%\authorrunning{Short form of author list} % if too long for running head

\institute{E. Shamsara \at Methods in Medical Informatics, Department of Computer Science, University of Tübingen, 72076 Tübingen, Germany\\
M. E. Yamakou \at Department of Data Science, Friedrich-Alexander-Universit\"{a}t Erlangen-N\"{u}rnberg, Cauerstr. 11, 91058 Erlangen, Germany\\
Fatihcan M. Atay \at Department of Mathematics, Bilkent University, 06800 Ankara, Turkey\\
J. Jost \at Max-Planck-Institut
f\"{u}r Mathematik in den Naturwissenschaften, Inselstr. 22, 04103 Leipzig, Germany
\at Santa Fe Institute for the Sciences of Complexity,  Santa Fe, NM 87501, USA
\at ScaDS.AI, Dresden/Leipzig, Germany\\
             \email{elham.shamsara@uni-tuebingen.de}\\
             \email{marius.yamakou@fau.de}, Corresponding author\\
             \email{f.atay@bilkent.edu.tr}\\
             \email{jost@mis.mpg.de}
                      %  \\
            %of F. Author  %  if needed
}

\date{Received: date / Accepted: date}
% The correct dates will be entered by the editor

\maketitle

\begin{abstract}
We consider the standard neural field equation with an exponential temporal kernel. 
 We analyze the time-independent (static) and time-dependent (dynamic) bifurcations of the equilibrium 
 solution and the emerging spatiotemporal wave patterns. We show that an exponential temporal kernel 
 does not allow static bifurcations such as saddle-node, pitchfork, and in particular, static Turing 
 bifurcations. However, the exponential temporal kernel possesses the important property that it takes into account the finite memory of past activities of neurons, which Green's function does not. Through a dynamic bifurcation analysis,  we give explicit bifurcation conditions. Hopf bifurcations  lead to temporally non-constant,  but spatially constant solutions, 
 but Turing-Hopf bifurcations generate spatially and temporally non-constant solutions, in particular, traveling waves. Bifurcation parameters are 
 the coefficient of the exponential temporal kernel, 
 the transmission speed of neural signals, the time delay rate of synapses, and the ratio of 
 excitatory to inhibitory synaptic weights. 

\keywords{Neural fields, exponential temporal kernel, leakage, transmission delays, bifurcation analysis, spatiotemporal patterns}
%\PACS{PACS code1 \and PACS code2 \and more}
%\subclass{92C20 \and 37N25 \and 37G10}
\end{abstract}

\newpage

%%%%%%%%%%%%%%%%%%%%%%%%%%%%%%%%%%%%%%%%%%%%%%%%%%%%%%%%%%%%%%%%%%%%%%%%%%%%%%%%%%%%%%%%%%%%%%%%%%%%%%%%%%%%%%%%%%%%%%%%%%%%%%%%%%%%%%%%%%%%%%%%%%%%%%%%%%%%%%%%%%

\section{Introduction}\label{section1}

It is a well-established and basic neurophysiological fact that neural
activity leads to particular spatiotemporal patterns in the cortex; see for
instance the survey in \citep{wu2008},  and other
brain structures like the hippocampus, see for example
\citep{lubenov2009}. These spatiotemporal patterns have the qualitative
properties of periodic or traveling waves, see, for instance, \citep{townsend2015}. 
Such patterns, like  periodic and
traveling waves, play important roles in neurophysiological models  of  cognitive processing, beginning with 
the synchronization models of von der Malsburg \citep{malsburg1981} or the synfire chains of Abeles \citep{abeles1982}. 
It is, therefore, important to understand 
the emergence of these patterns in densely connected 
 networks of neurons that communicate 
with each other by transmitting neural information via their synapses
\citep{kandel2000principles}. For  understanding such
macroscopic patterns, it seems natural to abstract from details at the
microscopic, that is, neuronal, level, and to study pattern formation from a
more general perspective. One recent approach \citep{galinsky2020} looks at the
electromagnetic properties and the folding geometry of brain tissue. A more
classical, and by now rather well-established approach is the neural field
theory.  
Neural field theory considers populations of neurons embedded in a coarse-grained spatial area, and neural field 
equations describe the spatiotemporal evolution of coarse-grained variables like the  firing rate activity in these populations of neurons \citep{wilson1973mathematical}. 
Wilson and Cowan first introduced neural field models as a spatially extended version of 
Hopfield neural networks \citep{wilson1973mathematical,wilson1972excitatory}. 
A simplified model that could be mathematically treated in a rather explicit form was developed by Amari \citep{amari1977dynamics}, 
which consists of nonlinear integrodifferential equations.
These equations 
play an important role also in other fields, such as machine learning, which combines ideas from neural field modeling 
and model based recognition \citep{veltz2011stability,perlovsky2006toward}. 

Neural fields have seen significant progress in both theoretical and numerical studies over the recent years 
\citep{alswaihli2018kernel, abbassian2012neural,bressloff2011spatiotemporal,haken2007brain,karbowski2000multispikes,morelli2004associative,prager2003stochastic,spiridon2001effect}.
An important fact in neural field modeling is the consideration of axonal conduction delays arising from the finite speed of signals traveling along the axonal distance.
Some recent and significant contributions to neural field modeling with transmission delays are presented 
in \citep{atay2004stability, atay2006neural,hutt2005analysis,hutt2006effects,veltz2011stability,spek2022dynamics,van2013local}. 
In \citep{atay2004stability}, a stability analysis is given for neural field equations in the
presence of finite propagation speed and for a general class of connectivity kernels, and
sufficient conditions for the stability of equilibrium solutions are given.
It is shown that the non-stationary bifurcations of equilibria depend on the propagation delays and the connectivity kernel, 
whereas the stationary bifurcations rely only on the connectivity kernel.
In \citep{hutt2005analysis}, the stability of neural fields with a general connectivity kernel and space-dependent transmission delays is analyzed. 
It is found that  Turing instability  occurs with local inhibition and lateral excitation, 
while wave instability occurs with local excitation and lateral inhibition.
The standard neural field model with propagation speed distribution of signal transmission 
speeds is considered in \citep{hutt2006effects}, where
the effect of distributed speeds on the dynamical behavior is investigated. 
It is shown that the variance of the speed distribution affects the frequency of bifurcating periodic solutions and the phase speed of traveling waves.
It is also shown that the axonal speed distributions increase the traveling front speed. The results in \citep{hutt2006effects} were
extended in \citep{atay2006neural}, where long-range feedback delays are considered in the standard neural field model. 
There, it is shown that in a reduced model, delayed excitatory feedback generally facilitates stationary bifurcations and
Turing patterns while suppressing the bifurcation of periodic solutions and traveling waves. In the case of oscillatory bifurcations, the variance of the distributed propagation and feedback delays affect the frequency of periodic solutions and their traveling speed \citep{muller2018cortical,watt2009traveling}. 

The study in \citep{veltz2011stability}  considers neural field equations with space-dependent delays and uses two techniques: (i) the computation of the eigenvalues of the linear operator defined by the linearized equations to establish a sufficient condition for stability, which is independent of the characteristics of the delays, and (ii) the formulation of the problem as a fixed point problem to find new sufficient conditions for the stability of stationary solutions which depend upon the values of the delays. The work in \citep{spek2022dynamics} investigated a neural field model that incorporates transmission delays and a connectivity kernel consisting of a linear combination of exponentials. The authors examined the dynamics and stability of this model within a two-dimensional spatial domain. They analyzed the spectrum of the linearized equation and identified the presence of a supercritical Hopf bifurcation. They also explored the possibilities of extending this model to incorporate multiple populations and higher-dimensional spatial domains. Moreover, the investigation in \citep{van2013local} showed that one could recast neural field models with transmission delays into abstract delay differential equations (DDE) and subsequently use standard results from dynamical systems theory, such as the principle of linearised (in)stability, center manifold reduction and normal form computation to study the bifurcations of the delayed neural field models. In particular, they showed that the associated steady state of the DDE might destabilize under certain conditions via a Hopf bifurcation.

Various experimental methods of recording the activity of brain tissue in vitro and in vivo 
 demonstrate the existence of traveling waves. Neural field theory offers a theoretical framework 
 for studying such phenomena. The question, then, is to identify the structural 
 assumptions and the parameter regimes for the emergence of traveling waves in neural fields. 	
This work aims to analytically and numerically study the static and dynamic bifurcations and 
spatiotemporal wave patterns generated by the classical neural field model with an \textit{exponential} temporal kernel which is more general than the Green's function used in \citep{atay2004stability} and \citep{senk2020conditions}.
In \citep{senk2020conditions}, the temporal connectivity kernel is the product of an alpha function\footnote{See Section \ref{section2} for a definition of the alpha function.} and the Heaviside function, which yields 
a function with the same properties as the Green's function, and thus yields the same characteristic polynomial as in \citep{atay2004stability}. 
We recall that Green's function $G(t,t')$ is the solution to $LG(t,t')=\delta(t-t')$ satisfying the given boundary conditions, where $L$ is a
differential operator. This is a differential equation for $G$ (or a partial differential equation if we are in more than one dimension), 
with a very specific source term on the right-hand-side: the Dirac delta, which is
$0$ if $t\neq t'$, 
and hence does not consider finite memory of past activities of neurons.

In contrast, in this paper, 
the derivative of the exponential temporal kernel tends to $0$ as $t\rightarrow\infty$. Also, it decreases monotonically in finite time,
meaning that it takes into account a finite memory of past activities of neurons, which Green's function does not. 
Ref.~\citep{senk2020conditions} is quite inspiring for reducing a biologically more realistic microscopic model of leaky integrate-and-fire
 neurons with distance-dependent connectivity to an effective neural field model. Because of the type of kernels used there, 
two different neuron populations, excitatory and inhibitory ones, are needed to induce dynamic bifurcations. 
 Here, we work with a Mexican hat-type spatial kernel (which models short-range excitation and mid-range inhibition), and an exponential temporal kernel, and we will demonstrate similar types of dynamic bifurcations as in 
 \citep{amari1977dynamics,senk2020conditions} with only a single population. Thus, in our model, we have identified the parameter regimes for periodic patterns via Hopf bifurcations and traveling-wave-like spatiotemporal patterns via Turing-Hopf bifurcations. These patterns are typically seen in electrophysiological recordings of the activity of cortical and other brain tissues and may support basic cognitive processes at the neurophysiological level.

This paper is organized as follows: In  Sect. \ref{section2}, we present the model equation and obtain its equilibrium solution. Sect. \ref{section3} is devoted to the static bifurcation analysis of the equilibrium solution. 
In Sect. \ref{section4}, we investigate dynamic bifurcations of the equilibrium solution and the ensuing patterns of traveling waves, and finally, we conclude with a general discussion and some remarks in Sect. \ref{section5}.

%%%%%%%%%%%%%%%%%%%%%%%%%%%%%%%%%%%%%%%%%%%%%%%%%%%%%%%%%%%%%%%%%%%%%%%%%%%%%%%%%%%%%%%%%%%%%%%%%%%%%%%%%%%%%%%%%%%%%%%%%%%%%%%%%%%%%%%%%%%%%%%%%%%%%%%%%%%%%%%%%%

\section{The model and the equilibrium solution}\label{section2}
We consider  a neural field model represented by an
infinite-dimensional dynamical system in the form of an
integrodifferential equation \citep{polner2017space, arqub2017adaptation,faugeras2015stochastic, rankin2014continuation}, with axonal conduction
delay  \citep{fang2016monotone, breakspear2017dynamic, pinto2001spatially}. In this equation, the position of a neuron at a time
$t$ is given by a spatial variable $x$, in the literature usually considered to be
continuous in $\mathbb{R}$ or $\mathbb{R}^2$. The state of the
neural field, $v(x,t)$ (membrane potential), evolves according to
\begin{align} \label{eq1}
v(x,t)=\int_{-\infty}^{t} \kappa(t-s)S(x,s)\,ds+\int_{-\infty}^{t} \big[I_1(x,s)-\frac{1}{\tau}v(x,s)\big]\,ds,
\end{align} 
with an arbitrary initial condition $v(x, -\infty) = v_i$. 
Here, $v(x,t)$ is interpreted as a neural field representing the local activity of a population 
of neurons at position $x$ and time $t$, and $I_1(x,t)$ is an external input current originating from the surrounding environment or other neural populations.  e.g., from  other cortical regions or the midbrain \citep{nunez1995experimental}.
The first integral converts the incoming pulse activity $S$ of the neuron at $x$ into its state by
convolution with a temporal kernel (impulse response function)
$\kappa$. The second integral balances the external input $I_1$ with a decay or
leakage term, with a time constant $\tau>0$ arising from the
the temporal decay rate of synapses. 
In this paper, we take the past activity of neurons into account for the
impulse response using an \textit{exponential} temporal kernel. In \citep{atay2004stability}, 
such a kernel was taken as the Green's function of a first-order differential operator. 
Here, in order to be able to carry out a detailed bifurcation analysis depending on that kernel, we use a more explicit form, 
namely an exponential decay:
\begin{align} \label{eq00}
\kappa(t-s)=\begin{cases}
\alpha_{1}e^{-\alpha_{2}(t-s)} & \text{if}\:\:t-s\geq0,\\
0 & \text{if}\:\:t-s<0,
\end{cases}
\end{align}
where $\alpha_{1}$ and
$\alpha_{2}$ are positive constants. A normalization condition requiring that the integral of the kernel be $1$ gives $\alpha_{1}=\alpha_{2}:=\alpha$.
 Such kernels are standard in the neuroscience literature and are usually 
called $\alpha$-functions (see e.g. \citep{gerstner2002spiking}).
We want to explicitly point out that even though the exponential kernel used in this work reduces to the kernel used in \citep{atay2004stability} 
as $\alpha\rightarrow \infty$, our bifurcations cannot, in general, automatically reduce to 
those in \citep{atay2004stability}. The presence of a leakage term (which is neglected in \citep{atay2004stability}), 
characterized by a temporal decay rate parameter $\tau>0$ in our model, does not allow for such a reduction.

The crucial idea in neural field models is that the incoming activity $S(x,t)$ is obtained by a
spatial convolution via an integral with some convolution kernel
$J(x,y)$, that is,
\begin{align} \label{eq0}
S(x,t)=c\int_{\Omega}
J(x,y)F\Big(v\big(y,t-\frac{|y-x|}{\nu}\big)\Big)dy+I_2(x,t).
\end{align}
Here, $c>0$ is some constant that involves various temporal and
spatial scales, $\Omega$ is the spatial domain
which is usually taken as
$\mathbb{R}$ or $\mathbb{R}^2$ in the literature, although other choices, 
like $\mathbb{R}^3$ or $S^2$, are neuro-biologically plausible and mathematically tractable. 
The synaptic weight function $J$ typically describes local excitation--lateral inhibition or
local inhibition--lateral excitation. The function $F$ is a
the transfer function (for instance, a sigmoid or a Heaviside
function $H(v-v_\mathrm{th})$, for some threshold $v_\mathrm{th}$; however, later on, $F$ needs to be smooth), $I_2(x,t)$ is an internal input current originating from the synaptic connectivity between the neurons in the neural population. When a neuron fires, it releases neurotransmitters into the synaptic cleft, influencing the neighboring neurons by exciting or inhibiting their activity. This synaptic interaction results in an internal input current  $I_2(x,t)$ (i.e., feedback from within the neural population itself) that affects the rate of change of neural activity at each point in space. It is worth noting that, unlike most studies where only the external input current is considered, the presence of external and internal input currents (i.e., $I_1(x,t)$ and $I_2(x,t)$, respectively) in neural field modeling allows for a more comprehensive description of how neural activity evolves over space and time, considering both the impact of external stimuli and the intrinsic interactions among neurons within the population.
The distance $|y-x|$ between $x$ and $y$ (for instance the Euclidean distance), and $\nu>0$ is the transmission speed of neural signals.
Thus, a finite transmission speed introduces a distance-dependent
transmission delay, which approaches $0$
as $\nu\rightarrow\infty$. We also assume a homogeneous field where the connectivity $J(x,y)$ 
depends only on the distance $|y-x|$, and so we replace $J(x,y)$ by an even function $J(y-x)$.
In our numerical investigations, we will use the following spatial {convolution kernel \citep{hutt2005analysis} and 
sigmoid transfer function \citep{wilson1973mathematical,robinson1997propagation}:} 
\begin{eqnarray}\label{num}
J(y-x)&=&\frac{a_{e}}{2}e^{-|y-x|}-\frac{a_{i}}{2}re^{-|y-x|r},\\
\label{num1}F(v)&=&\frac{1}{1+\exp(-1.8(v-3))},
\end{eqnarray}
{respectively, where $a_e$ and $a_i$ respectively denote the excitatory and inhibitory synaptic weights and  
$r$ denotes the relation of excitatory and inhibitory spatial ranges \citep{hutt2003pattern}. 
The combination of excitatory and inhibitory axonal networks may yield four different spatial interactions, 
namely pure excitation (i.e., when $a_i=0$), pure inhibition (i.e., when $a_e=0$), local excitation-lateral inhibition 
(i.e., when $a_e\neq0$, $a_i\neq0$, and $r<1$) giving $J$ a Mexican-hat shape, 
and local inhibition-lateral excitation (i.e., when $a_e\neq0$, $a_i\neq0$, and $r>1$, giving $J$ an inverse Mexican-hat shape.}

Differentiating \eqref{eq1} with respect to $t$ yields
\begin{align} \label{eq2}
\frac{\mathrm{d} }{\mathrm{d} t}v(x,t)=\int_{-\infty}^{t} \frac{\mathrm{d}\kappa(t-s) }
{\mathrm{d} t}S(x,s)\,ds-\frac{1}{\tau} v(x,t)+\alpha S(x,t)
+ I_1(x,t).
\end{align}
Inserting \eqref{eq00} and \eqref{eq0} in \eqref{eq2} gives \begin{align} \label{eq6}
\frac{\mathrm{d} }{\mathrm{d}
t}v(x,t)=&-\alpha^{2}c\int_{-\infty}^{t}
e^{-\alpha(t-s)}\int_{\Omega}J(y-x)F\Big(v\big(y,s-\frac{|y-x|}{\nu}\big)\Big)
dy \, ds-\alpha^{2}\int_{-\infty}^{t} e^{-\alpha(t-s)}I_2(x,s)ds\nonumber\\
&-\frac{1}{\tau} v(x,t)+\alpha c \int_{\Omega}
J(y-x)F\Big(v\big(y,t-\frac{|y-x|}{\nu}\big)\Big)dy+\alpha
I_2(x,t) + I_1(x,t).
\end{align}
In order to analyze the dynamic behavior of \eqref{eq6}, we
assume  constant internal and external input currents, i.e., $I_1(x,s)=E$, $I_2(x,s)=I_{0}$, and a
constant solution
\begin{align} \label{eq7}
v(x,t)=v_{0}.
\end{align}
Substituting into \eqref{eq6} shows that $v_0$ satisfies the fixed point equation
\begin{align} \label{eq8}
v_{0}+\alpha\tau cF(v_{0})\int_{\Omega} J(y-x)dy+\alpha\tau I_{0}-\alpha \tau I_{0}-\alpha \tau c F(v_{0})\int_{\Omega} J(y-x)dy
-\tau E=0,
\end{align}
which is satisfied by the fixed point (equilibrium solution)
\begin{align} \label{eq9}
v_{0}=\tau E.
\end{align}
The other terms in \eqref{eq8} cancel because if $v$ and $I_2$ are
  constant, then so is $S$, and hence the first integral in \eqref{eq1} is
  independent of $t$. In the following sections, we study the static and dynamic bifurcations of this equilibrium solution.

%%%%%%%%%%%%%%%%%%%%%%%%%%%%%%%%%%%%%%%%%%%%%%%%%%%%%%%%%%%%%%%%%%%%%%%%%%%%%%%%%%%%%%%%%%%%%%%%%%%%%%%%%%%%%%%%%%%

\section{Static bifurcations of the equilibrium solution} \label{section3}
For the purpose of this paper, we will consider only one spatial dimension, i.e., we take $\Omega=\mathbb{R}$. 
However, one should note that the results presented in this paper may not automatically translate to higher 
dimensions  (i.e., $\Omega=\mathbb{R}^N, N\geq2$) where entirely new dynamical behaviors could emerge.

To obtain the parametric region of the stability of the equilibrium solution \eqref{eq9}, we
linearize the integrodifferential equation \eqref{eq6} around the equilibrium 
solution $v_{0}=\tau E$. Let $w(x,t)=v(x,t)-v_{0}$. Then,
\begin{align} \label{eq12}
\frac{\mathrm{d} }{\mathrm{d}
t}w(x,t)=&-\alpha^{2}c\int_{-\infty}^{t}
e^{-\alpha(t-s)}\int_{-\infty}^{\infty}J(y-x)
\Big[F(v_{0})+F'(v_0)w\big(y,s-\frac{|y-x|}{\nu}\big)\Big]\,dy\,ds\nonumber\\
&-\alpha I_{0}-\frac{1}{\tau}w(x,t)-\frac{v_0}{\tau} +\alpha
c\int_{-\infty}^{\infty}J(y-x)\Big[F(v_{0})+F'(v_0)w\big(y,t-\frac{|y-x|}{\nu}\big)\Big]dy+\alpha I_{0}+E,
\end{align}
which simplifies to
\begin{align} \label{eq120}
\frac{\mathrm{d} }{\mathrm{d} t}w(x,t)=&-\alpha^{2}c
F'(v_0)\int_{-\infty}^{t}
e^{-\alpha(t-s)}
\int_{-\infty}^{\infty}J(y-x)w\big(y,s-\frac{|y-x|}{\nu}\big)\,dy\,ds\nonumber\\
&-\frac{1}{\tau}w(x,t)+\alpha c F'(v_0)\int_{-\infty}^{\infty}J(y-x)w\big(y,t-\frac{|y-x|}{\nu}\big)\,dy+E - \frac{v_0}{\tau}.
\end{align}

To check the stability of the equilibrium solution, we substitute the  general Fourier-Laplace ansatz for linear integrodifferential equations
\begin{align} \label{ansatz}
 w(x,t)=e^{\lambda t}e^{ikx}, \:\: \lambda \in \mathbb{C}, k \in \mathbb{R},
 \end{align}
 into  \eqref{eq120} to get
\begin{align} \label{eq13}
(\tau\lambda+1)e^{\lambda t}e^{ikx}=&-\alpha^{2}c\tau
F'(v_0)
\int_{-\infty}^{t}e^{-\alpha(t-s)}e^{\lambda s}\,ds\int_{-\infty}^{\infty} J(y-x) e^{iky}e^{-\lambda\frac{|y-x|}{\nu}}\,dy\nonumber\\
&+\alpha c \tau F'(v_0)e^{\lambda
t}\int_{-\infty}^{\infty} J(y-x)
e^{iky}e^{-\lambda\frac{|y-x|}{\nu}}dy+\tau E -v_0 .
\end{align}
From \eqref{eq9}, we note that $\tau E-v_0=0$ in \eqref{eq13}. The first integral in \eqref{eq13} is independent of $y$, so
\begin{align} \label{eq130}
\int_{-\infty}^{t} e^{-\alpha(t-s)}e^{\lambda s}ds=\frac{1}{\alpha+\lambda}e^{\lambda t}, \:\:\: \alpha + \lambda \neq 0.
\end{align}

By making a change of variable: $z=y-x$ in \eqref{eq13}, and
considering $\lambda \neq - \alpha$, we obtain the linear
variational equation as
\begin{align} \label{eq14a}
\tau\lambda+1=\alpha c \tau F'(v_0)
\Big(\frac{\lambda}{\alpha+\lambda}\Big)\int_{-\infty}^{\infty}J(z)e^{-\lambda\frac{|z|}{\nu}}e^{-ikz}dz.
\end{align}
For static bifurcations, that is, for bifurcations leading to 
temporally constant solutions, we must have $\lambda = 0$
\citep{atay2004stability}. However,  $\lambda=0$ is not a
solution of \eqref{eq14a}. And in fact, for a constant solution $v_0\neq \tau E$, the second integral in \eqref{eq1} would diverge. 
Therefore, static bifurcations (such as saddle-node and pitchfork bifurcations) cannot occur, and 
hence, in particular, we conclude the following:

\begin{theorem}
  The neural field equation \eqref{eq1} with the exponential temporal kernel \eqref{eq00} does not admit static  
  bifurcations from the spatially uniform equilibrium solution \eqref{eq7}.  
\end{theorem}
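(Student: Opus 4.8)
The plan is to argue directly from the characteristic equation \eqref{eq14}. A static Turing bifurcation of the equilibrium $v_0=\tau E$ would manifest itself as a zero eigenvalue of the linearization \eqref{eq120} carried by a spatially non-constant mode; in the Fourier--Laplace formulation this means precisely that $\lambda=0$ solves \eqref{eq14} for some wavenumber $k\neq 0$. So the entire proof reduces to verifying that $\lambda=0$ is never a root of \eqref{eq14}, for any $k\in\mathbb{R}$.

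First I would recall that, since $\alpha>0$, the value $\lambda=0$ is admissible in \eqref{eq14} (the derivation of \eqref{eq14} only excluded $\lambda=-\alpha$, which is irrelevant here because $-\alpha\neq 0$). Then I would evaluate both sides of \eqref{eq14} at $\lambda=0$. The left-hand side is $L(0)=\tau\cdot 0+1=1$. The right-hand side carries the prefactor $\lambda/(\alpha+\lambda)$, which equals $0$ at $\lambda=0$; it only remains to check that the spatial integral $\int_{-\infty}^{\infty}J(z)e^{-\lambda|z|/\nu}e^{-ikz}\,dz$ stays finite as $\lambda\to 0$, which holds because $J\in L^1(\mathbb{R})$ (true for the Mexican-hat kernel \eqref{num}, and assumed throughout), the integral then converging to the Fourier transform $\widehat{J}(k)$. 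Hence the right-hand side of \eqref{eq14} equals $0$ at $\lambda=0$, while the left-hand side equals $1$, so $L(0)=1\neq 0$ for every $k$. Consequently $\lambda=0$ is not a characteristic root, no static bifurcation of $v_0$ occurs, and in particular there is no static Turing bifurcation, which is the assertion of the theorem.

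As a complementary, model-level confirmation I would also invoke the observation recorded after \eqref{eq9}: if $v$ were time-independent and equal to a constant other than $\tau E$, the leakage integral $\int_{-\infty}^{t}\tfrac{1}{\tau}v(x,s)\,ds$ in \eqref{eq1} would diverge, so no static state besides the equilibrium itself is even consistent with the model, and a fortiori none bifurcates from it with a nontrivial spatial profile.

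The main obstacle is not any computation --- the vanishing of the prefactor $\lambda/(\alpha+\lambda)$ does all the work --- but rather pinning down the equivalence ``static Turing bifurcation $\Longleftrightarrow$ $\lambda=0$ is a root of \eqref{eq14} with $k\neq 0$'' in this delayed integro-differential setting. I would handle this by appealing to the Fourier--Laplace stability framework of \cite{atay2004stability}, where the characteristic equation is shown to capture the spectrum of the linearized evolution and a static (steady) bifurcation requires $0$ to lie in that spectrum, rather than reproving it; and by stating explicitly the mild standing hypothesis $J\in L^1(\mathbb{R})$ that makes the spatial integral in \eqref{eq14} finite and continuous in $\lambda$ near $0$.
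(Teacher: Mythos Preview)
Your proposal is correct and follows essentially the same argument as the paper: the paper observes that static bifurcations require $\lambda=0$ (citing \cite{atay2004stability}), notes that $\lambda=0$ is not a solution of \eqref{eq14}, and adds the same remark about the divergence of the leakage integral for any temporal constant other than $\tau E$. Your write-up is simply more explicit about why the right-hand side of \eqref{eq14} vanishes at $\lambda=0$ (the prefactor $\lambda/(\alpha+\lambda)$ and the finiteness of the spatial integral under $J\in L^1$), but the underlying idea is identical.
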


 This is in contrast to the models investigated in  \citep{coombes2007modeling, hutt2003pattern} where such patterns may occur.

\section{A stability condition for constant equilibrium}
We next give a sufficient condition for the
asymptotic stability of the equilibrium solution $v_0$. 
We make use of the following lemma from \citep{atay2004stability}.

\begin{lemma} \label{lm1}
Let $L(\lambda)$ be a polynomial whose roots have non-positive real parts.
Then $ \left | L(\sigma +i\omega ) \right |\geqslant \left |
L(i\omega ) \right | $ for all $\sigma \geqslant 0$ and $\omega
\in \mathbb{R}$.
\end{lemma}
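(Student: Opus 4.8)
The plan is to factor $L$ by the fundamental theorem of algebra and reduce the claim to a comparison of individual linear factors. I would write $L(\lambda)=a\prod_{j=1}^{n}(\lambda-r_{j})$ with leading coefficient $a\neq 0$ and roots $r_{1},\dots,r_{n}$, which by hypothesis satisfy $\Re(r_{j})\le 0$ for every $j$. Taking moduli gives $|L(\sigma+i\omega)|=|a|\prod_{j=1}^{n}|\sigma+i\omega-r_{j}|$ and likewise $|L(i\omega)|=|a|\prod_{j=1}^{n}|i\omega-r_{j}|$, so it suffices to establish the per-factor inequality $|\sigma+i\omega-r_{j}|\ge|i\omega-r_{j}|$ for each $j$ and then multiply.

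For a fixed root $r_{j}=\mu_{j}+i\nu_{j}$ with $\mu_{j}\le 0$, I would compare squared moduli: $|\sigma+i\omega-r_{j}|^{2}=(\sigma-\mu_{j})^{2}+(\omega-\nu_{j})^{2}$ and $|i\omega-r_{j}|^{2}=\mu_{j}^{2}+(\omega-\nu_{j})^{2}$. The contributions of the imaginary parts are identical, so the inequality reduces to $(\sigma-\mu_{j})^{2}\ge\mu_{j}^{2}$. Since $\sigma\ge 0$ and $-\mu_{j}\ge 0$, we have $\sigma-\mu_{j}=\sigma+(-\mu_{j})\ge-\mu_{j}\ge 0$, and squaring this chain of inequalities between nonnegative numbers (monotonicity of $t\mapsto t^{2}$ on $[0,\infty)$) yields $(\sigma-\mu_{j})^{2}\ge\mu_{j}^{2}$, as required. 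Multiplying the per-factor inequalities and restoring the positive factor $|a|$ then gives $|L(\sigma+i\omega)|\ge|L(i\omega)|$ for all $\sigma\ge 0$ and $\omega\in\mathbb{R}$.

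There is no serious obstacle here: the argument is elementary once one sees the correct reduction to a single factor. The only points needing a little care are the sign bookkeeping in the step $\sigma-\mu_{j}\ge-\mu_{j}\ge 0$ — this is precisely where the two hypotheses $\sigma\ge 0$ and $\Re(r_{j})\le 0$ are used — and the degenerate case $\deg L=0$ (i.e.\ $L$ constant), in which the product is empty and the asserted inequality holds trivially as an equality.
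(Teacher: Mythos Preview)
Your proof is correct and complete: the factorization via the fundamental theorem of algebra followed by the per-factor comparison $(\sigma-\mu_j)^2\ge\mu_j^2$ is the standard argument, and your sign bookkeeping is accurate. The paper itself does not give a proof of this lemma but simply cites \cite{atay2004stability}, so there is nothing to compare against; your self-contained argument is in fact more informative than what the paper offers.
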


\begin{theorem} \label{theo1}
Let $ D := |\beta|\int_{-\infty}^{\infty}\left | J(z) \right |dz $, where $\beta=\alpha c \tau F'(v_0)$, and $L(\lambda)$ be a polynomial whose roots have non-positive real parts. If 
\begin{align} \label{the1}
D< \underset{\omega \in \mathbb{R}}{\min} \left |L(i\omega) \right|,
\end{align}
then $v_{0}$ is locally asymptotically stable. In particular, the condition
\begin{align} \label{the2}
D<1,
\end{align}
 is sufficient for the local asymptotic stability of $v_{0}$. 
\end{theorem}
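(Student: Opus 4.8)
The plan is to prove asymptotic stability by showing that, under \eqref{the1}, the characteristic equation \eqref{eq14} has \emph{no} root $\lambda$ with $\Re\lambda\geq0$ for any spatial mode $k\in\mathbb{R}$; the conclusion then follows from the spectral characterization of stability used in \cite{atay2004stability}. First I would argue by contradiction: suppose $\lambda=\sigma+i\omega$ with $\sigma\geq0$ (note $\lambda\neq-\alpha$ is automatic since $\alpha>0$) solves \eqref{eq14} for some $k$. Taking moduli on both sides of \eqref{eq14} gives
\begin{equation*}
|L(\lambda)|=|\beta|\,\left|\frac{\lambda}{\alpha+\lambda}\right|\,\left|\int_{-\infty}^{\infty}J(z)\,e^{-\lambda|z|/\nu}\,e^{-ikz}\,dz\right|.
\end{equation*}

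The next step is to bound the right-hand side. Since $\alpha>0$ and $\sigma\geq0$, we have $|\alpha+\lambda|^2=(\alpha+\sigma)^2+\omega^2\geq\sigma^2+\omega^2=|\lambda|^2$, so $|\lambda/(\alpha+\lambda)|\leq1$; moreover $|e^{-\lambda|z|/\nu}|=e^{-\sigma|z|/\nu}\leq1$ because $\sigma\geq0$, $\nu>0$, and $|e^{-ikz}|=1$, so the integral is bounded in modulus by $\int_{-\infty}^{\infty}|J(z)|\,dz$. Hence $|L(\lambda)|\leq|\beta|\int_{-\infty}^{\infty}|J(z)|\,dz=D$. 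On the other hand, $L(\lambda)=\tau\lambda+1$ is a first-degree polynomial whose only root is $-1/\tau<0$, so Lemma~\ref{lm1} applies and yields $|L(\sigma+i\omega)|\geq|L(i\omega)|\geq\min_{\omega'\in\mathbb{R}}|L(i\omega')|$ (the minimum being attained since $|L(i\omega')|\to\infty$ as $|\omega'|\to\infty$). Combining the two estimates gives $\min_{\omega\in\mathbb{R}}|L(i\omega)|\leq D$, contradicting \eqref{the1}. Since all bounds are uniform in $k$ (which enters only through the unimodular factor $e^{-ikz}$), the same conclusion holds simultaneously for every spatial mode, so no root lies in the closed right half-plane and $v_0$ is asymptotically stable. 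For the second assertion, I would just note $|L(i\omega)|^2=1+\tau^2\omega^2\geq1$ with equality at $\omega=0$, so $\min_{\omega\in\mathbb{R}}|L(i\omega)|=1$, and \eqref{the2} is the special case $D<1$ of \eqref{the1}.

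The short inequality chain is not the real difficulty; the main obstacle is the passage from ``no root with $\Re\lambda\geq0$'' to genuine asymptotic stability of the integro-differential equation \eqref{eq1}. This requires the spectral-mapping framework of \cite{atay2004stability}: one needs that the spectrum of the linearization consists precisely of the roots of \eqref{eq14} over all $k$, and that these roots have real parts bounded above, so that the absence of right-half-plane roots actually produces a spectral gap. A secondary point to verify is that the reduction from \eqref{eq120} to \eqref{eq14} discards no stability-relevant eigenvalue — in particular that the excluded value $\lambda=-\alpha$, lying in the open left half-plane, is harmless.
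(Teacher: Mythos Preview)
Your argument is correct and follows essentially the same route as the paper's proof: assume a root with $\sigma\geq0$, bound $|L(\sigma+i\omega)|$ by $D$ using $|\lambda/(\alpha+\lambda)|\leq1$ and $|e^{-\sigma|z|/\nu}|\leq1$, invoke Lemma~\ref{lm1} to get $|L(i\omega)|\leq|L(\sigma+i\omega)|$, and derive a contradiction with \eqref{the1}; the second assertion is then the computation $\min_\omega|L(i\omega)|=1$. Your additional remarks on the spectral-mapping step and the uniformity in $k$ are more explicit than what the paper states, but the core inequality chain is identical.
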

\begin{proof}: In the ansatz $w(x,t)=e^{\lambda t}e^{ikx}$, let $\lambda=\sigma+i\omega$, where $\sigma$ and $\omega$ are real numbers. 
We will prove that $\sigma<0$ if \eqref{the1} holds. Suppose, by contradiction,  that \eqref{the1} holds but $\sigma\geq0$. 
Let $L(\lambda):=\tau\lambda + 1$, then by \eqref{eq14a},
it follows that
\begin{eqnarray}\nonumber
\begin{split}
 \left | L(\sigma +i\omega ) \right |&=\left | \beta \right |\left
|\frac{\sigma+i\omega}{\alpha+ \sigma+i\omega}\right |\left |
\int_{-\infty}^{\infty}J(z)e^{-(\sigma
+i\omega)\frac{|z|}{\nu}}e^{-ikz} dz\right| \\
&\leqslant \left| \beta \right| \left| \frac{\sigma+i\omega}{\alpha+ \sigma+i\omega} \right|
\int_{-\infty}^{\infty} \left| J(z)\right |\left |e^{-(\sigma
+i\omega)\frac{|z|}{\nu}}\right|dz \\
&\leqslant \left | \beta \right |\left
|\frac{\sigma+i\omega}{\alpha+ \sigma+i\omega}\right|
\int_{-\infty}^{\infty}\left |J(z)\right|dz.
\end{split}
\end{eqnarray}
Since $\alpha>0$, we have  $\left|\frac{\sigma+i\omega}{\alpha+ \sigma+i\omega}\right|<1$, which means
\begin{eqnarray}
\left | L(\sigma +i\omega ) \right |< \left| \beta \right| \int_{-\infty}^{\infty}\left |J(z)\right|dz.
\end{eqnarray}
By Lemma \ref{lm1},
\begin{align} \label{eq:001}
\left | L(i\omega ) \right |\leq \left | L(\sigma+i\omega ) \right |< \left | \beta \right |
\int_{-\infty}^{\infty}\left |J(z)\right|dz=D,
\end{align}
for some $\omega \in \mathbb{R}$. This, however, contradicts \eqref{the1}. 

Thus $\sigma<0$, and the equilibrium solution $v_0$ is locally asymptotically stable. 
This proves the first statement of the theorem. 
From $L(\lambda):=\tau\lambda + 1$, 
one has $\left|L(i\omega)\right|^2=1+\tau^2\omega^2$. Hence, if \eqref{the2} is satisfied, then
\begin{align}
D^2<1\leq1+\tau^2\omega^2=\left|L(i\omega)\right|^2,
\end{align}
for all $\omega \in \mathbb{R}$, which is a sufficient condition for stability
by \eqref{the2}. \qed 
\end{proof}

In Fig.\:\ref{fig1}\textbf{(a)}--\textbf{(c)}, we present the bifurcation diagrams 
showing the regions of stability and instability of the equilibrium solution
$v_0$. In the panels, the quantity 
$D:=|\beta|\int_{-\infty}^{\infty}|J(z)|dz$ is plotted against the bifurcation parameters $\alpha$, $r$, and $\tau$.  
\begin{figure}%[H]
\centering
\textbf{(a)}\includegraphics[width=7.5cm,height=6.0cm]{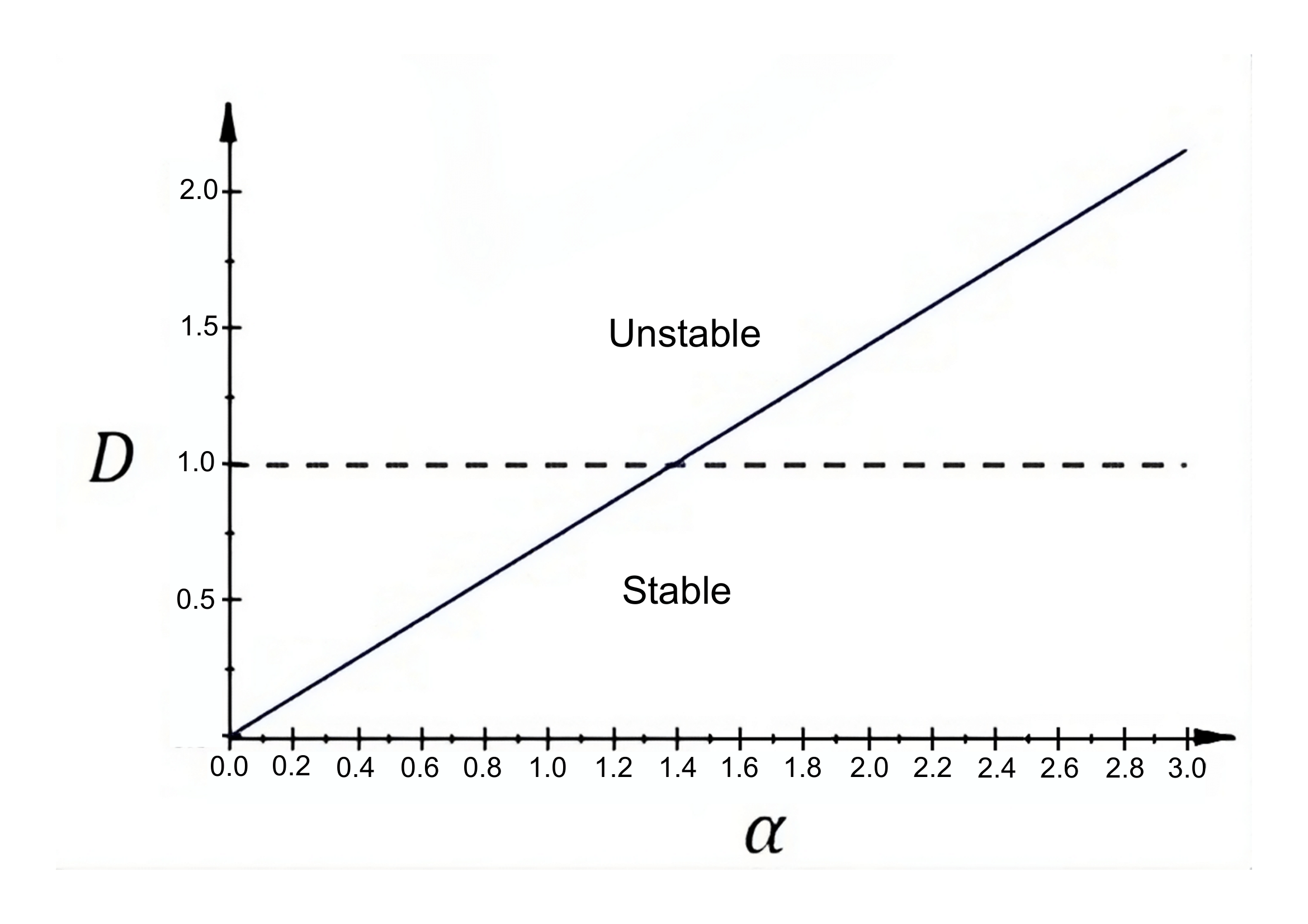}\textbf{(b)}\includegraphics[width=7.5cm,height=6.0cm]{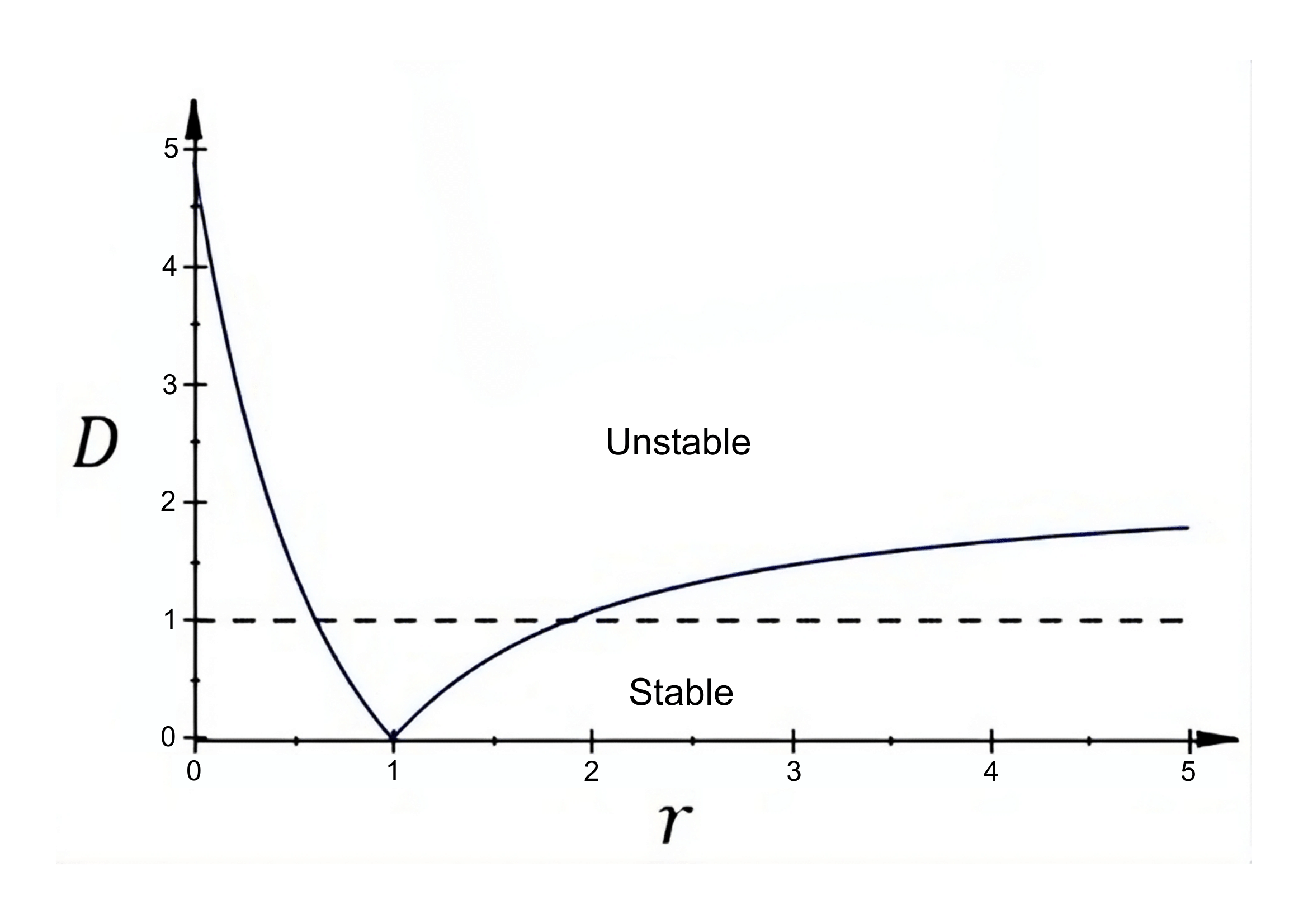}\\
\textbf{(c)}\includegraphics[width=7.5cm,height=6.0cm]{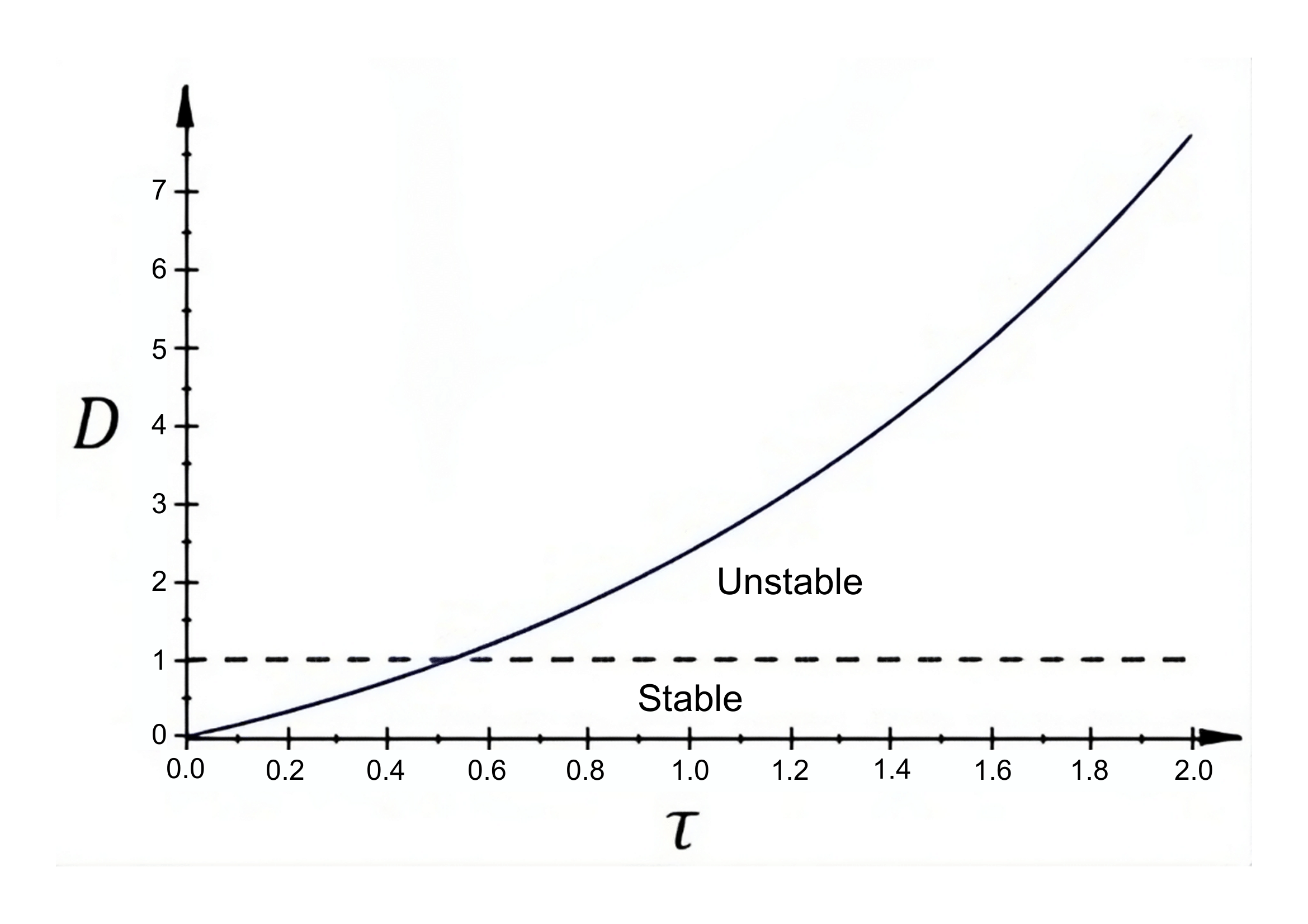}
\caption{The solid curves represent the quantity $D$ from Theorem \ref{theo1} plotted against the bifurcation parameters:
$\alpha$ in \textbf{(a)} with $\tau=0.7$, $r=0.5$; 
$r$ in \textbf{(b)} with $\alpha=2.0$, $\tau=0.7$; 
and $\tau$ in \textbf{(c)} with $\alpha=2.0$, $r=0.5$. 
The intervals of $\alpha\in(0.0,1.4)$, $r\in(0.60,1.84)$, and $\tau\in(0.0,0.51)$ in which the solid curves are
below the dashed horizontal line fulfills the sufficient condition of asymptotic stability of the  equilibrium solution 
$v_0 = \tau E$, following Theorem \ref{theo1}.
Other parameters are fixed at $c=15.0$, $E=0.275$, $a_{e} = 10.0$, $a_{i} =
a_{e}/r$. The nonlinear dependence on $\tau$, as opposed to the linear
  dependence on $\alpha$, arises because in $\beta=\alpha c \tau F'(v_0)$, we
  get an additional dependence since $v_0=\tau E$.} \label{fig1}
\end{figure}

We are interested in parameter constellations where spatiotemporal
patterns emerge, where the constant solution is \emph{not} stable. From the diagrams, this requires that the memory decay $\alpha$ and the time constant  $\tau$ of the synapses both be sufficiently large and the
  excitation and inhibition be sufficiently imbalanced. In fact, the
  case $r<1$, where inhibition is weaker, but more widely spread than
  excitation, is usually assumed in the literature to let $J$ acquire its
  Mexican hat shape. However, in this paper, we would use both cases: when $r<1$ (giving $J$ a Mexican-hat shape) and when $r>1$ (giving $J$ an inverse Mexican-hat shape.)

%%%%%%%%%%%%%%%%%%%%%%%%%%%%%%%%%%%%%%%%%%%%%%%%%%%%%%%%%%%%%%%%%%%%%%%%%%%%%%%%%%%%%%%%%%%%%%%%%%%%%%%%%%%%%%%%%%%%%%%%%%%%%%%%%%%%%%%%%%%%%%%%%%%%%%%%%%%%%%%%%%

\section{Dynamic bifurcations of the equilibrium solution}\label{section4}
In Section \ref{section3}, we have seen that static bifurcations are
not possible since $\lambda=0$ is not a solution of \eqref{eq14a}. In this section, we investigate the conditions for
oscillatory (dynamic) bifurcations. In the case of a homogeneous neural field, we use the kernel function 
$J$ and the sigmoid transfer function $F$ in \eqref{num} and \eqref{num1}, 
respectively.  
An infinitesimal perturbation of the form $w(x,t) = e^{\lambda t} e^ {ikx}$ by \eqref{eq14a} would then need to be satisfied, and could be written as
\begin{align} \label{eq3.3}\nonumber
\tau\lambda^2 + (\tau\alpha+1)\lambda+\alpha&=\alpha c\tau F'(v_0)\lambda\int_{-\infty}^{\infty} J(z)e^{-\lambda\frac{|z|}{\nu}}e^{-ikz}dz\\
&=\beta\lambda\Bigg[\int_{-\infty}^{\infty}\Big(\frac{a_{e}}{2}e^{-|z|}-\frac{a_{i}r}{2}e^{-|z|r}\Big)e^{-\lambda\frac{|z|}{\nu}}e^{-ikz}dz\Bigg]\nonumber\\\nonumber
&=\beta\lambda\Bigg[a_{e}\frac{1+\frac{\lambda}{\nu}}{(1+\frac{\lambda}{\nu})^{2}+k^{2}}- a_{i}r\frac{r+\frac{\lambda}{\nu}}{(r+\frac{\lambda}{\nu})^{2}+k^{2}}\Bigg]\nonumber\\
&- i\beta\lambda\Bigg[\frac{a_e k}{(1+\frac{\lambda}{\nu})^{2}+k^{2}}-\frac{a_irk}{(r+\frac{\lambda}{\nu})^{2}+k^{2}}\Bigg].
\end{align}

In \eqref{eq3.3}, we shall consider the solution $\lambda$ as a function of $k$. The solution loses its stability when the real part of a root $\lambda$ in \eqref{eq3.3} changes from negative to positive. By tuning the parameters $r$, $\tau$ or $\alpha$, a critical point is eventually reached at $k=k_{c}$ in which the real part of the corresponding eigenvalue $\lambda(k_{c})$ of \eqref{eq3.3} become zero. 
From this critical point, one gets the critical wave-number
$k_{c}$ and the critical frequency  $\omega_c = \Im [\lambda(k_c)]$.
The case $k_{c}=0$ and $\omega_{c} \neq 0$ corresponds to a Hopf bifurcation 
\citep{folias2005breathers, laing2005spiral, folias2004breathing}, and the case $k_{c} \neq 0$ and $\omega_{c} \neq 0$  
to a Turing-Hopf bifurcation \citep{coombes2007modeling,
  venkov2007dynamic,touboul2012mean}. We shall 
investigate both cases in more detail.

\subsection{Hopf bifurcation}
Now, we insert $\lambda=i\omega$ in \eqref{eq3.3}, and because we are searching conditions for Hopf bifurcation (i.e., when $k_{c}=0$), we insert $k=k_c$ (so that the real part of the corresponding eigenvalue $\lambda|_{(k_{c}=0)}$ becomes
zero) to get a polynomial of degree six in $\omega$ given by 
\begin{equation} \label{eq3.4b1}
q_6\omega^6 + q_5\omega^5 + q_4\omega^4 + q_3\omega^3
+ q_2\omega^2 + q_1\omega=0,
\end{equation}
where the coefficients are given by 
\begin{equation} \label{eq3.4b2}
\begin{cases}
\displaystyle{q_6=\frac{\tau}{\nu^4}},\\[2.0mm] \displaystyle{q_5=\frac{\tau\alpha + 1}{\nu^4}},\\[2.0mm] \displaystyle{q_4=\frac{\tau(1+r^2)}{\nu^2}+\frac{\beta(a_e - a_ir)}{\nu^3}-\frac{\alpha}{\nu^4}},\\[2.0mm] \displaystyle{q_3=\frac{(r^2+1)(\tau\alpha +1)+\beta(a_ir-a_e)}{\nu^2}},\\[2.0mm]
\displaystyle{q_2=\tau r^2+\frac{\beta r (a_er-a_i)}{\nu} - \frac{\alpha(1+r^2)}{\nu^2}},\\[2.0mm] \displaystyle{q_1=r^2(\tau\alpha+1)+\beta r(a_i-a_er)}.
\end{cases}
\end{equation}
A trivial solution of \eqref{eq3.4b1} is $\omega=0$, but $\lambda$ should be purely imaginary, i.e., $\omega\neq0$. However, this trivial solution allows us to reduce the degree of \eqref{eq3.4b1} to get
\begin{equation} \label{eq3.4b2}
q_6\omega^5 + q_5\omega^4 + q_4\omega^3 + q_3\omega^2
+ q_2\omega + q_1=0.
\end{equation}
Substituting a solution $i\omega_{c}$ ($\omega_c\neq0$) in \eqref{eq3.4b2} and
separating the real and imaginary parts yields
\begin{equation} \label{eq3.4b3}
  \begin{cases}
   a_3 \omega_{c}^
{4}+ a_2 \omega_{c}^{2}+a_1=0
,\\
b_3 \omega_{c}^{4}+ b_2\omega_{c}^{2}+ b_1=0,
  \end{cases}  
\end{equation}
where 
\begin{equation} \label{eq3.4b4}
  \begin{cases}
\displaystyle{a_3=  \frac{1+\alpha\tau}{\nu^4},}\\[2.0mm]
\displaystyle{a_2=-\frac{\beta(a_ir-a_e) + (\alpha\tau+1)(r^2+1)}{\nu^2},}\\[2.0mm]
\displaystyle{a_1=\beta r(a_i-a_er)+ r^2(\alpha\tau+1),}\\[2.0mm]
\displaystyle{b_3=\frac{\tau}{\nu^4},}\\[2.0mm]
\displaystyle{b_2= {\frac {\alpha}{{\nu}^
{4}}}-\frac{\beta(a_e-a_ir)}{\nu^3}-\frac{\tau(1+r^2)}{\nu^2}},\\[2.0mm]
\displaystyle{b_1={r}^{2}\tau+\frac{\beta r(a_er-a_i)}{\nu}}-\frac{\alpha(1+r^2)}{\nu^2}.
  \end{cases}  
\end{equation}
From the first equation of \eqref{eq3.4b3}, we get
\begin{equation}\label{eq3.4b5}
\omega_{c}^{2}={\frac {-a_{2}\pm\sqrt {{a_{2}}^{2}-4\,a_{1}\,a_{3}}
}{2a_{3}}}.
\end{equation}
Since $\omega_{c}\in\mathbb{R}-\{0\}$, $\omega_{c}^{2}>0$. And because $a_3>0$, \eqref{eq3.4b5} can be satisfied only if 
\begin{equation}\label{eq3.4b7}
\begin{cases}
a_ir-a_e\geq0,\\
\Gamma=\displaystyle{\big[\beta(a_ir-a_e) + (\alpha\tau+1)(r^2+1)\big]^2-
4\big[1+\alpha\tau\big]
\big[\beta r(a_i-a_er)+ r^2(\alpha\tau+1)\big]\geq0}.
\end{cases}
\end{equation}
  
When we substitute  $\omega_{c}^{2}=({-a_{2}+\sqrt {{a_{2}}^{2}-4\,a_{1}\,a_{3}}})/2a_{3}$
into the second equation of \eqref{eq3.4b3}, we obtain 
 \begin{align}\label{eq3.4b8}
  \Big[\tau\, \Delta ^{2}-\tau\, \left( {r}^{2}+1 \right) 
  \Delta+{r}^{2}\tau\Big]\nu^2+\Big[ \beta\, \left( a_{i}\,r-a_{e} \right) 
 \Delta +\beta\,r \left( 
a_{e}\,r-a_{i}\right) \Big]\nu+\Big[ \alpha\,  \Delta  -\alpha
\, \left( {r}^{2}+1 \right)\Big]=0,
 \end{align}
 where
\begin{align} \label{eq3.4b9}
& \Delta=\frac{1}{2\alpha\tau+2}\Bigg[\sqrt {-4\, \left( \alpha\,\tau+1 \right)  \left[ \beta\,r \left( -a_{
e}\,r+a_{i} \right) + \left( \alpha\,\tau+1 \right) {r}^{2} \right] +
 \left[ \beta\, \left( a_{i}\,r-a_{e} \right) +
\left( \alpha\,\tau+1
 \right)  \left( {r}^{2}+1 \right)  \right] ^{2}}
  \nonumber\\&\:\:\:\:+\beta\, \left( a_{i}\,r-a_{e} \right) + \left( \alpha\,\tau+1 \right) 
 \left( {r}^{2}+1 \right)\Bigg].
\end{align}
The relations in \eqref{eq3.4b7} provide the parametric region in which
the neural field oscillates due to Hopf bifurcation that occurs at $\omega_{c}^{2}=({-a_{2}+\sqrt {{a_{2}}^{2}-4\,a_{1}\,a_{3}}})/2a_{3}$ in \eqref{eq3.4b5} and which must also satisfy the first equation in \eqref{eq3.4b3}. It is worth noting that in \eqref{eq3.4b7}, the first condition is always satisfied (since in our analysis, we always have $r=a_e/a_i$). Hence, it suffices to find the parametric regions where the second condition in \eqref{eq3.4b3} (i.e., $\Gamma\ge0$) holds, for each parameter of interest. Fig.\:\ref{hopf4a}\textbf{(a)} and \textbf{(b)} show the Hopf bifurcation curves for the parameters $\alpha$ and $\tau$, respectively. Fig.\:\ref{hopf4a} \textbf{(c)}-\textbf{(e)} show the Hopf bifurcation curves (in blue) in the parameter spaces $\alpha$-$\nu$, $\tau$-$\nu$, and $r$-$\nu$, respectively, in which the gray regions satisfy \eqref{eq3.4b7}.

Fig.\:\ref{hopf4aa} shows corresponding space-time patterns of the membrane potential oscillating in different regions of the Hopf bifurcation parameter space of Fig.\:\ref{hopf4a}\textbf{(c)}.  In Fig.\:\ref{hopf4aa}\textbf{(a)}-\textbf(c), the values of the parameters $\nu$ and $\alpha$ are chosen below, on, and above the Hopf bifurcation curve of
Fig.\:\ref{hopf4a}\textbf{(c)}, leading to oscillatory regimes with different frequencies and amplitudes.

\begin{figure}[H]
\begin{center}
\textbf{(a)}\includegraphics[width=7.5cm,height=4.5cm]{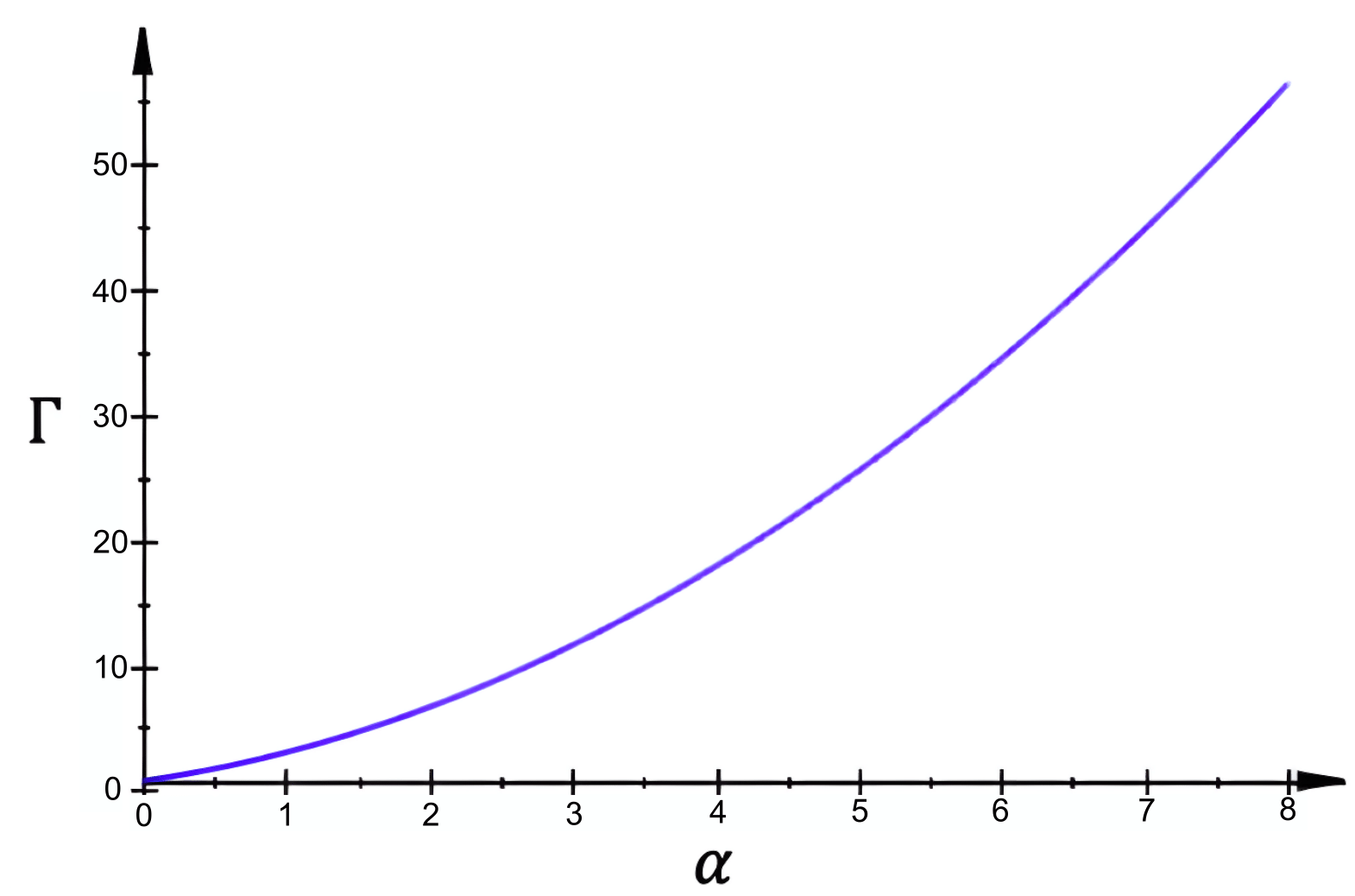}\textbf{(b)}\includegraphics[width=7.5cm,height=4.5cm]{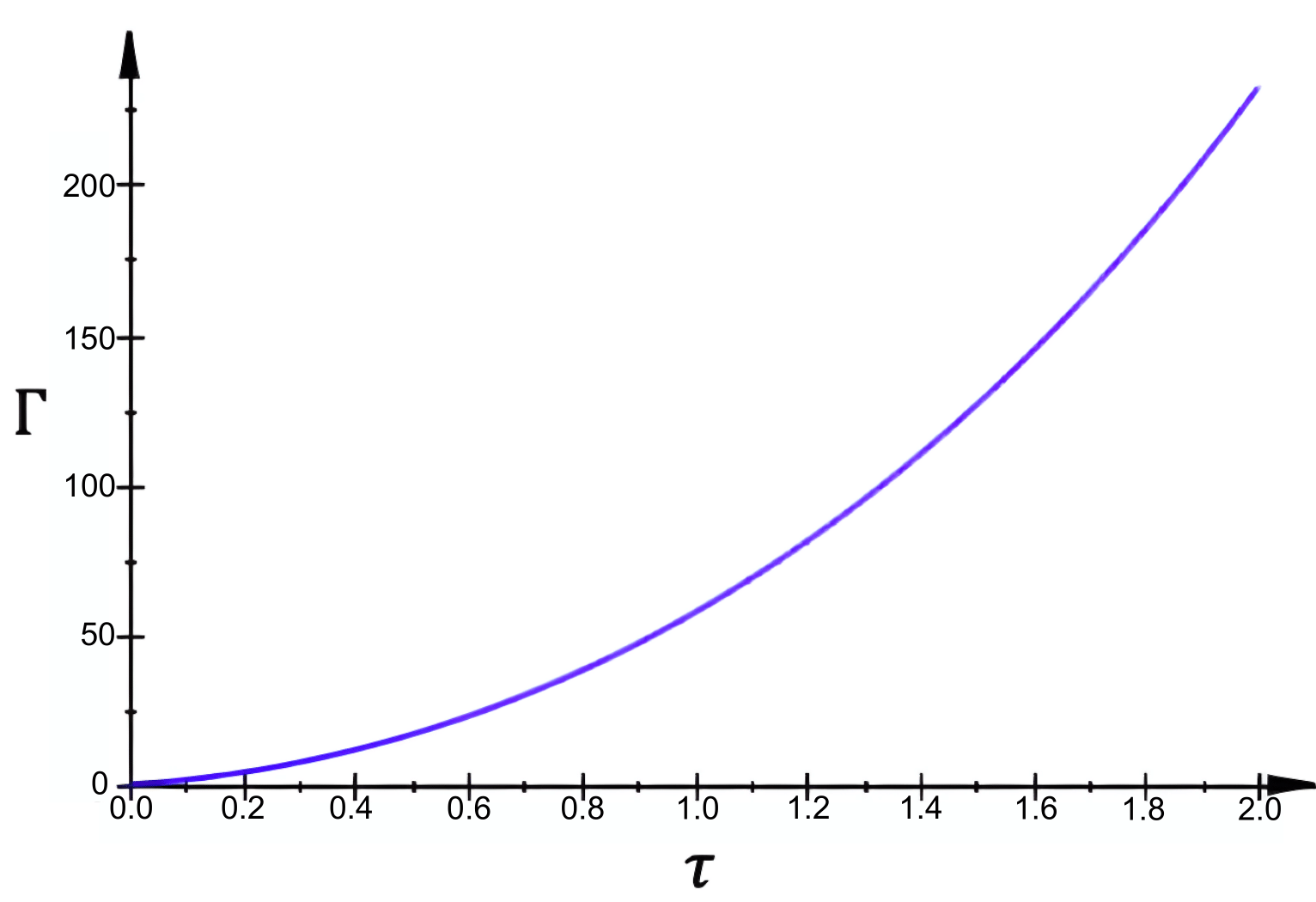}
\textbf{(c)}\includegraphics[width=7.5cm,height=4.5cm]{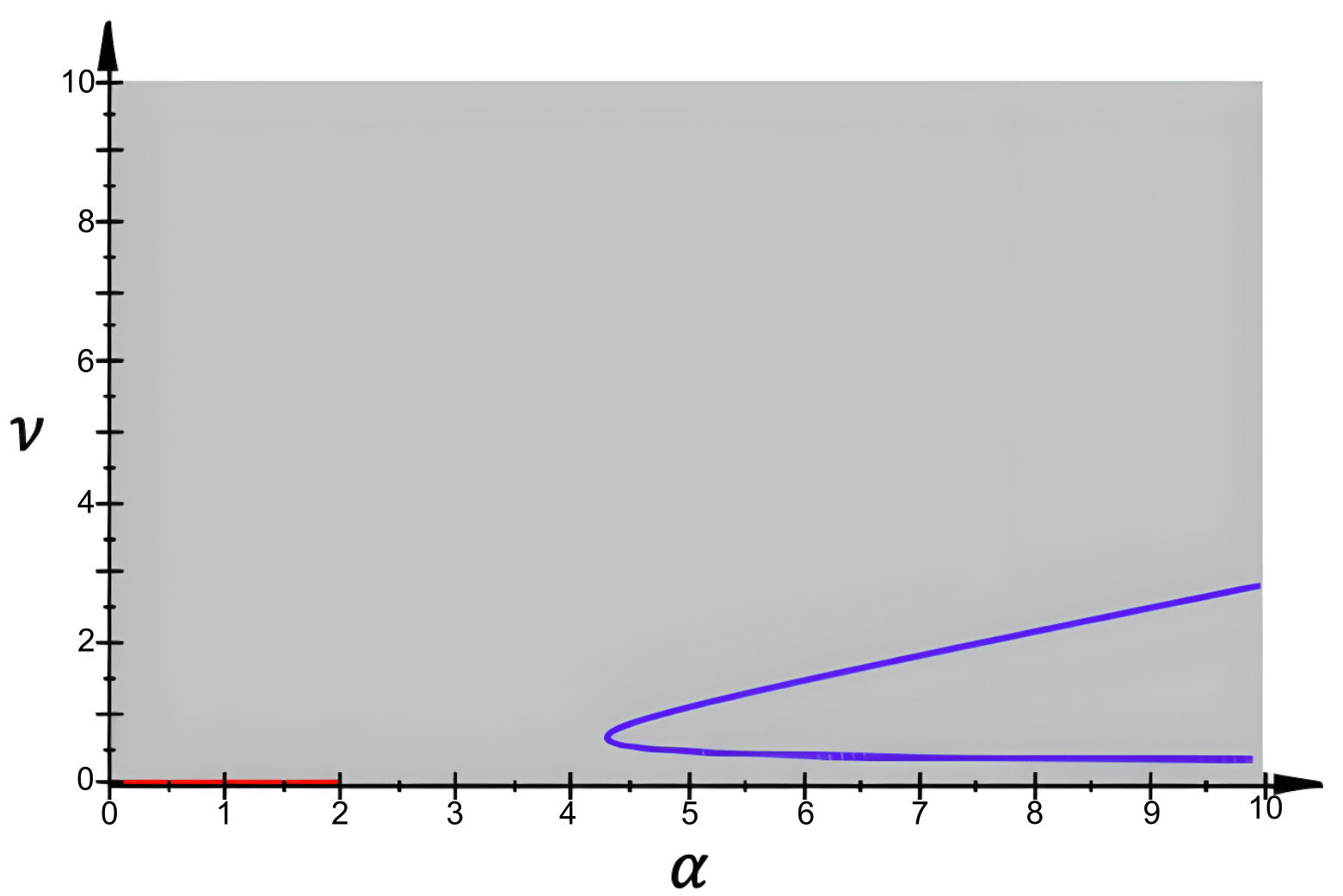}\textbf{(d)}\includegraphics[width=7.5cm,height=4.5cm]{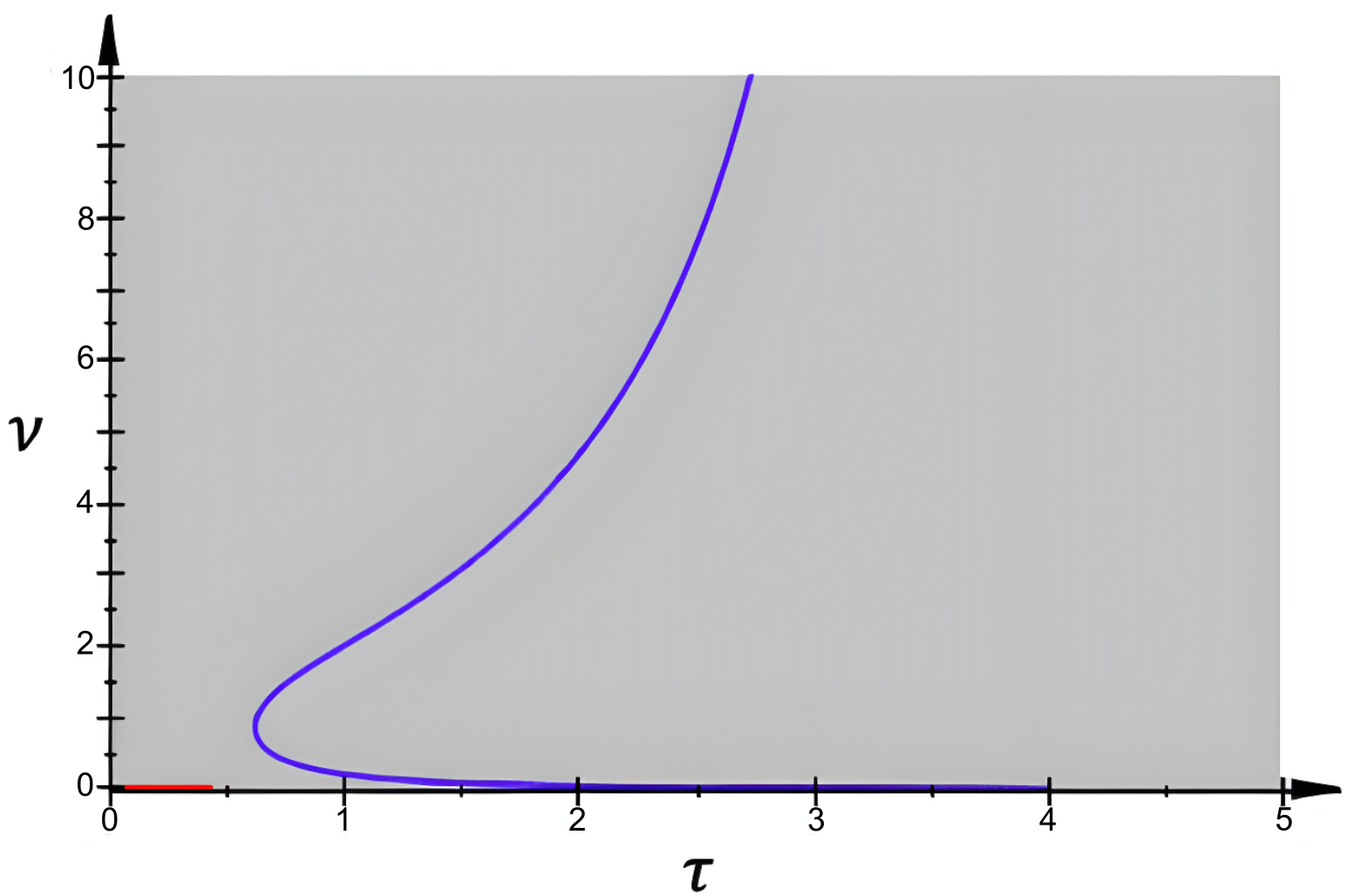}
\textbf{(e)}\includegraphics[width=7.5cm,height=4.5cm]{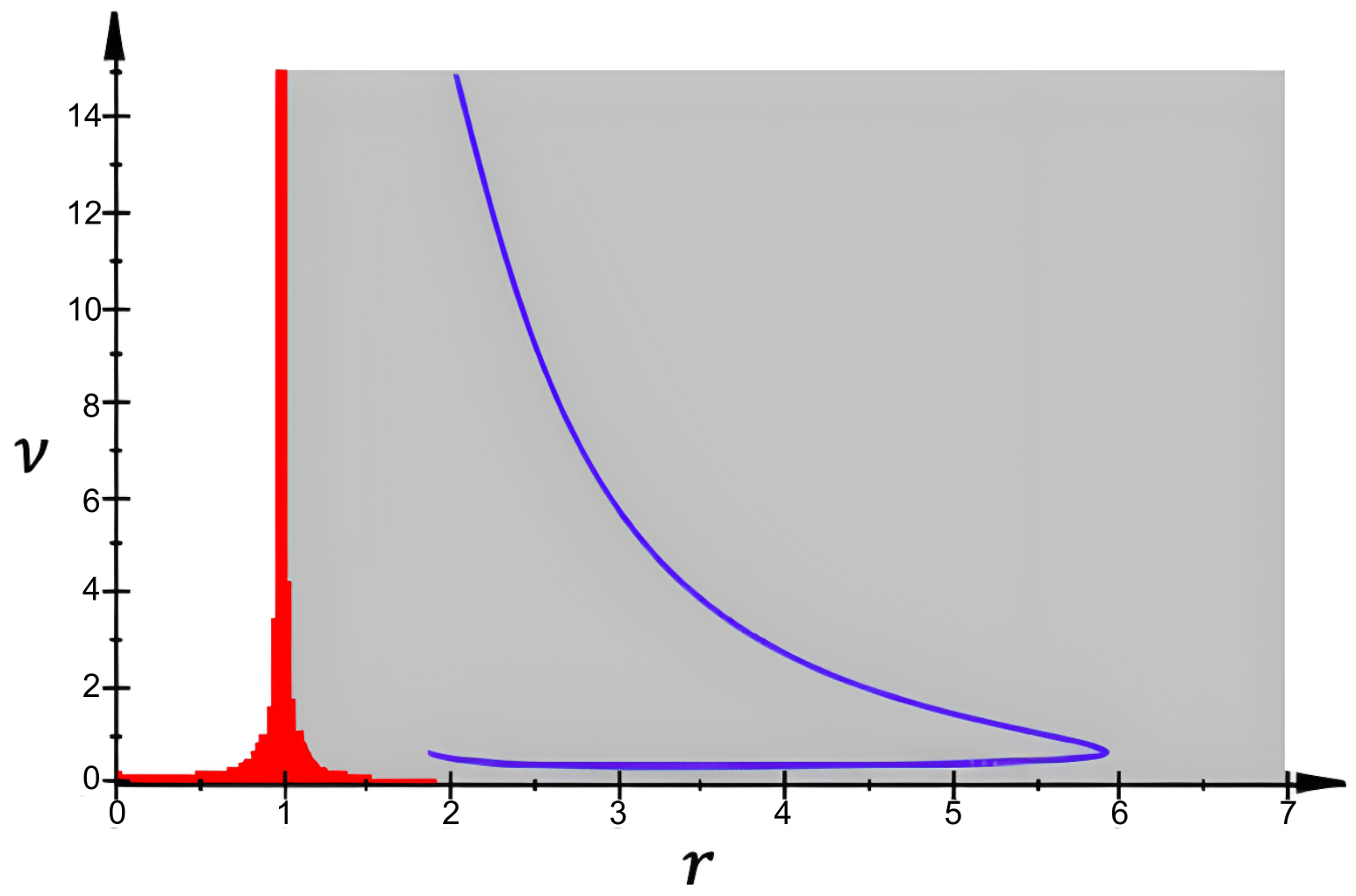}
\caption{ Panels \textbf{(a)} and \textbf{(b)} show the interval of 
$\alpha\in(0.0,8.0)$ and $\tau\in(0.0,2.0)$ for which  \eqref{eq3.4b7} is satisfied.
In the panels \textbf{(c)}-\textbf{(e)}, the blue curves represent the solutions of \eqref{eq3.4b8} 
in the parameter spaces $\alpha$-$\nu$, $\tau$-$\nu$, and $r$-$\nu$, respectively.
The gray areas in the panels represent the region of the parameter spaces where \eqref{eq3.4b7} holds, i.e., the oscillatory region. The parts of blue curves from \eqref{eq3.4b8} that lie in the gray region represent the values of the parameters for which oscillations exist, as these values satisfy  \eqref{eq3.4b7}.
In \textbf{(a)} $\tau=0.75$, $r=5.0$;
in \textbf{(b)} $\alpha=6.0$, $r=5.0$;
in \textbf{(c)} $\tau=0.75$, $r=5.0$;
in \textbf{(d)} $\alpha=6.0$, $r=5.0$; and
in \textbf{(e)} $\alpha=6.0$, $\tau=0.75$. 
In \textbf{(a)}-\textbf{(e)}, the other parameter values are: 
$c=15.0$, $E=0.275$, $a_{e}=10.0$, $v_0=\tau E$, $\beta=\alpha c\tau F'(v_0)$, and  in \textbf{(e)} $a_{i}=a_e/r$.} \label{hopf4a}
\end{center}
\end{figure}

\begin{figure}[H]
\begin{center}
\textbf{(a)}\includegraphics[width=7.5cm,height=4.5cm]{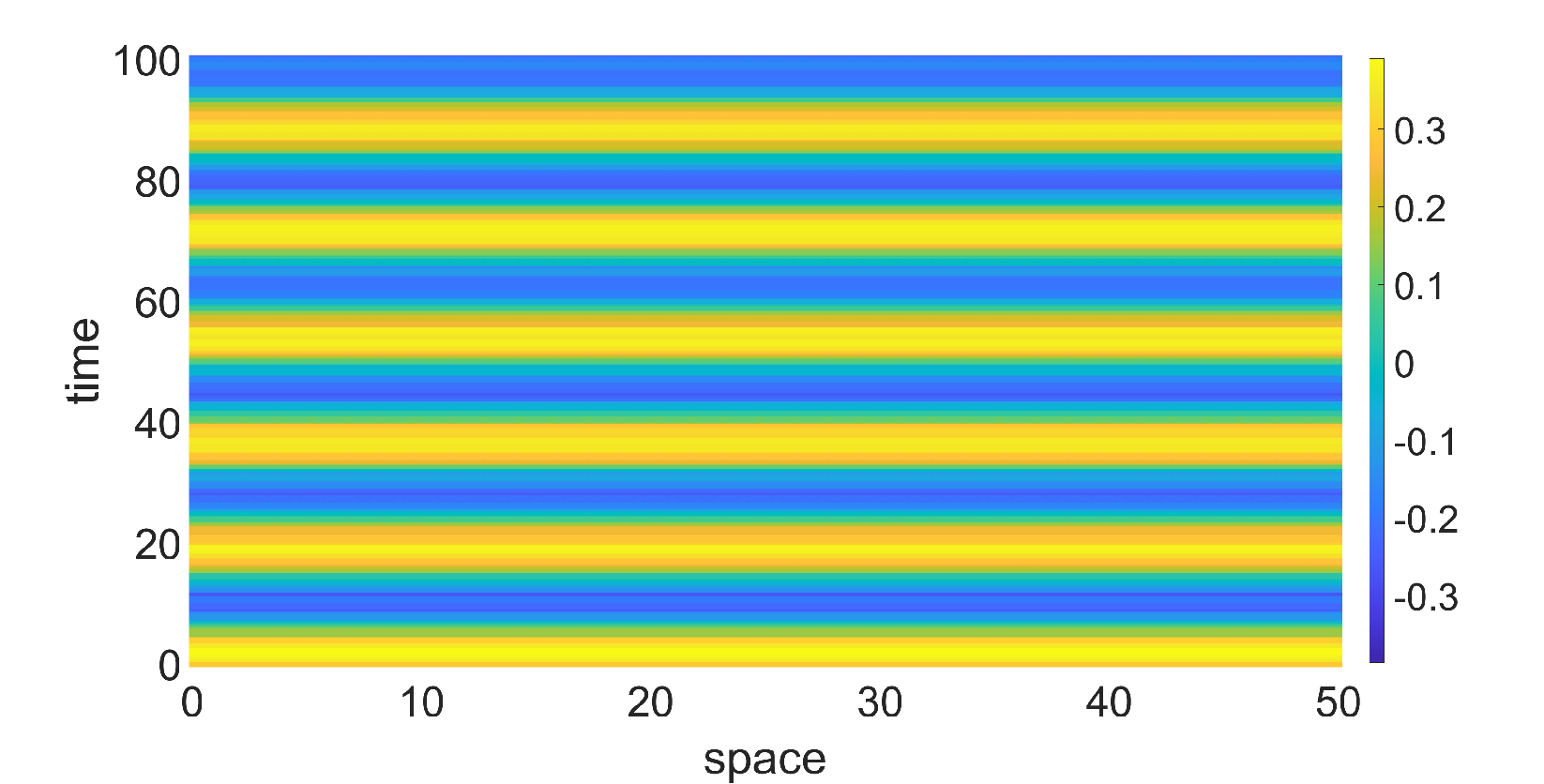}\textbf{(b)}\includegraphics[width=7.5cm,height=4.5cm]{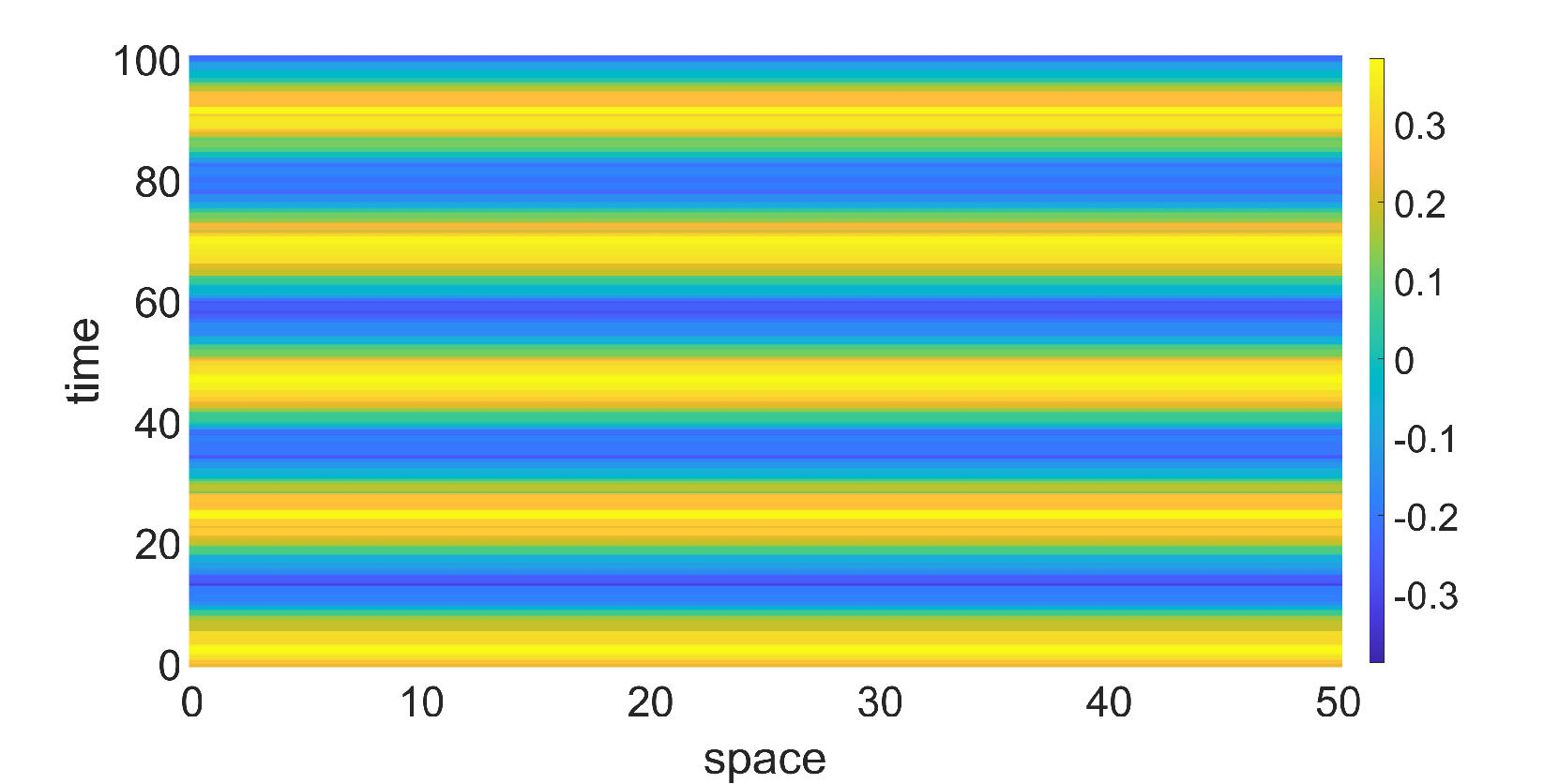}\textbf{(c)}
\includegraphics[width=7.5cm,height=4.5cm]{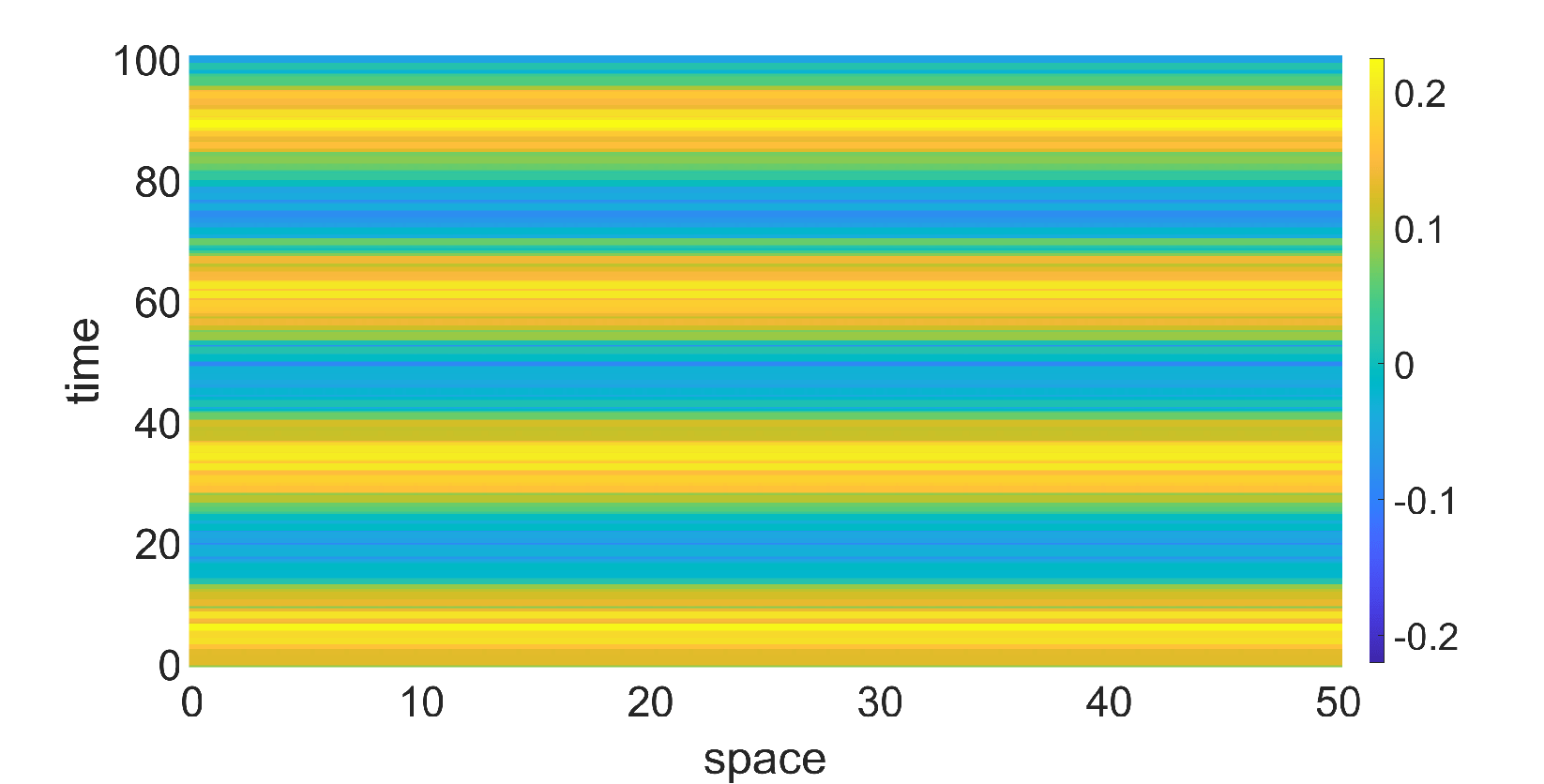}
\caption{Panels \textbf{(a)}--\textbf{(c)} show color coded space-time patterns of the membrane potential $v(x,t)$ emerging from Hopf instability.
In \textbf{(a)}, $\alpha=7.0$, $\nu=0.16$, i.e., below the Hopf bifurcation (blue) curve. 
In \textbf{(b)}, $\alpha=7.0$, $\nu=1.83$, i.e., on the Hopf bifurcation curve.
In \textbf{(c)}, $\alpha=7.0$, $\nu=2.5$, i.e., above the Hopf bifurcation curve.
In all cases, we have periodic oscillations of spatially constant solutions. The solutions are obtained for the Gaussian connectivity kernel in the panels, with initial conditions chosen randomly from a uniform distribution on $[v_0-0.1, v_0+0.1]$.
Other parameters are fixed at $\tau=0.75$, $a_e=10.0$, $a_i=2.0$, $r=a_e/a_i=5.0$, $c=15.0$, and $k=0$.} \label{hopf4aa}
\end{center}
\end{figure}
\subsection{Turing-Hopf bifurcation}
\begin{theorem} \label{theo3}
Let $D := |\beta|\int_{-\infty}^{\infty}\left | J(z) \right |dz$ and $L(\lambda)$ be a polynomial whose roots have non-positive real parts. Then there exists $B> 0$ depending only on $L$ and $D$ such that
\begin{align} \label{theo3.1}
\vert\omega\vert \leq B,
\end{align}
whenever $w(x,t)=e^{i\omega t}e^{ikx}$, $\omega,k\in  \mathbb{R}$, is a solution of \eqref{eq120}. Furthermore, if $D<1$, then there exists $A>0$, depending on $L$ and $D$, such that 
\begin{align} \label{theo3.2}
0<A\leq \vert \omega \vert.
\end{align}
In particular, if $L(\lambda)=\tau\lambda+1$, then
\begin{align} \label{theo3.3}
\tau^{2}\omega^{2}\leq D^{2}-1.
\end{align}
\end{theorem}
\begin{proof}
If $\lambda=i\omega$ satisfies the dispersion relation \eqref{eq14a} for some $k$, then
\begin{align} \label{pr3.1}
\vert L(i\omega) \vert \leq \beta \int_{-\infty}^{\infty}\vert J(z)\vert dz=D.
\end{align}
Since $\vert L(i\omega) \vert\rightarrow \infty$ as $\omega\rightarrow\pm \infty$ for any non-constant polynomial $L$, the above inequality implies an upper bound $B$ on $\vert \omega \vert$, which proves \eqref{theo3.1}. For the particular case $L(\lambda)=\tau\lambda+1$, \eqref{pr3.1} gives
\begin{align}
\vert L(i\omega)\vert^{2}=\tau^{2}\omega^{2}+1\leq D^{2},
\end{align}
which proves \eqref{theo3.3}.
\end{proof}

The following analytical result will be used in the rest of the numerical computations. The conditions for Turing-Hopf bifurcation require that with the 
 Fourier-Laplace ansatz \eqref{ansatz}, that is, 
 $w(x,t)=e^{\lambda t}e^{ikx}$, we find
$\lambda=\pm i\omega$ with $\omega \neq 0$ at some critical value $k_c\neq 0$.
Inserting $\lambda=i\omega$ in \eqref{eq14a}, we obtain
\begin{align} \label{eq4.1}
L(i\omega):=1+i\tau\omega=\beta
\Big(\frac{i\omega}{\alpha+i\omega}\Big)\int_{-\infty}^{\infty}J(z)e^{-i\omega\frac{|z|}{\nu}}e^{ikz}dz,
\end{align}
which yields upon expansion,
\begin{align} \label{eq4.2}
\beta \frac{\omega^{2}+i\alpha\omega}{\alpha^{2}+\omega^{2}}\int_{-\infty}^{\infty}J(z)
e^{-\frac{i\omega\left | z \right |}{\nu}}\cos(kz)dz &=\beta
\frac{\omega^{2}+i\alpha\omega}{\alpha^{2}+\omega^{2}}\int_{-\infty}^{\infty}J(z)\Big[\cos\Big(\frac{\omega\left
| z \right |}{\nu}\Big)
-i\sin\Big(\frac{\omega\left | z \right |}{\nu}\Big)\Big]\cos(kz)dz\nonumber\\
&=\frac{\beta}{2} \frac{\omega^{2}+i\alpha\omega}{\alpha^{2}+\omega^{2}}\int_{-\infty}^{\infty}J(z)\left[
e^{i\left | z \right |(\frac{\omega}{\nu}+k)}+e^{i\left | z \right
|(\frac{\omega}{\nu}-k)}\right] dz.
\end{align}
By substituting the power series 
\begin{align*}
e^{i\left | z \right |(\frac{\omega}{\nu}\pm k)}=\sum_{m=0}^{N }\frac{i^{m}(\frac{\omega }{\nu }\pm k)^{m}}{m!}\left | z \right |^{m}+O(\nu^{-(N+1)}),
\end{align*}
\eqref{eq4.2} is written as
\begin{align} \label{eq4.3}
\frac{\beta}{2} \frac{\omega^{2}+i\alpha\omega}{\alpha^{2}+\omega^{2}}\int_{-\infty}^{\infty}J(z)
\Big(\sum_{m=0}^{N }\frac{i^{m}}{m!}\left[ \Big(\frac{\omega }{\nu
}+k\Big)^{m}+\Big(\frac{\omega }{\nu }-k\Big)^{m}\right] \left | z
\right |^{m}+O(\nu^{-(N+1)})\Big)dz.
\end{align}
We define $J_{m}$ as 
\begin{align} \label{eq20}
J_{m}:=\int_{-\infty }^{\infty }J(z)\left | z \right |^{m}dz,
\end{align}
and the integrals are assumed to exist. Substituting \eqref{eq20} into \eqref{eq4.3} yields
\begin{align} \label{eq4.4}
L(i\omega)=\beta \frac{\omega^{2}+i\alpha\omega}{\alpha^{2}+\omega^{2}}\left[J_{0}+i\frac{\omega}{\nu}J_{1}
-\frac{1}{2!}\Big(k^{2}+\frac{\omega^{2}}{\nu^{2}}\Big)J_{2}
+\cdots \right].
\end{align}
Equating the right-hand side of \eqref{eq4.4} to the left-hand side of \eqref{eq4.1}, we get
\begin{align} \label{eq21}
1+i\tau\omega=\beta \frac{\omega^{2}+i\alpha\omega}{\alpha^{2}+\omega^{2}}\left[J_{0}+i\frac{\omega}{\nu}J_{1}
-\frac{1}{2!}\Big(k^{2}+\frac{\omega^{2}}{\nu^{2}}\Big)J_{2}
+\cdots \right].
\end{align}
The number of terms required for the above series to be helpful depends on the values of $\nu$ and $k$ as well as the shape of the kernel $J$. If $J$ is highly concentrated near the origin, as in our case (see \eqref{num}) or, more generally, if $J$ is of exponential order, then a few terms will suffice. That is, assume there exist positive numbers $\kappa_{1}$ and $\kappa_{2}$ such that
\begin{equation}
\left | J(z) \right |\leq \kappa_{1}e^{-\kappa_{2}\left | z \right |} \quad\text{ for all $z\in \mathbb{R}$}.
\end{equation}
Then by \eqref{eq20}
\begin{align*}
\left | J_{m} \right |\leq \int_{-\infty }^{\infty }\left | z \right |^{m}\kappa_{1}e^{-\kappa_{2}\left | z \right |}dz=2\kappa_{1}\int_{0}^{\infty}z^{m}e^{-\kappa_{2}z}dz=2\kappa_{1}\kappa_{2}^{-(m+1)}\Gamma(m+1)=2\kappa_{1}\kappa_{2}^{-(m+1)}m!,
\end{align*}
so the $m$th term in the series \eqref{eq21} is bounded in absolute value by
\begin{align*}
2\frac{\kappa_{1}}{\kappa_{2}}\Big(\frac{\left | \omega \right |}{\kappa_{2}\nu}\Big)^{m}\leq 2\frac{\kappa_{1}}{\kappa_{2}}\Big(\frac{B}{\kappa_{2}\nu}\Big)^{m},
\end{align*}
where we have used Theorem \ref{theo3} to bound the values of $\omega$. In the case of a high transmission speed $\nu$ or $B$ (for example, small $\beta$) or a large value of $\kappa_{2}$ (rapid decrease of $J$ away from the origin) or a bounded value of $k$, the finite series has increased precision. At least one of these conditions is assumed to be true, so a few terms are sufficient to determine the general behavior. To observe the qualitative effects of a finite transmission speed, we, therefore, neglect the terms from the fourth and higher orders in the series \eqref{eq21}.

Equating the real parts of both sides in \eqref{eq21}, and similarly with the imaginary parts, and considering $\omega \neq 0$
(a Turing-Hopf bifurcation condition), we have
\begin{align}\label{eq4.5}
\begin{cases} 
\begin{split}
1=\frac{\beta}{\alpha^{2}+\omega^{2}}\left[ \omega^2 J_{0}-\alpha\omega
\left( \frac{\omega}{\nu}J_{1} \right)-\omega^{2}\frac{1}{2!}(k^{2}+\frac{\omega^{2}}{\nu^{2}})J_{2} \right],\\[2.5mm]
\tau=\frac{\beta}{\alpha^{2}+\omega^{2}}\left[ \alpha
J_{0}+\omega\left(\frac{\omega}{\nu}J_{1} \right)
-\frac{1}{2!}\alpha
\Big(\big(k^{2}+\frac{\omega^{2}}{\nu^{2}}\big)J_{2}
\Big) \right].
\end{split}
\end{cases}
\end{align}
From \eqref{eq4.5} we have
\begin{align} \label{eq.4.12}
\frac{\tau}{\alpha
J_{0}+\frac{\omega^{2}}{\nu}J_{1}-\frac{\alpha}{2}\Big(k^{2}+\frac{\omega^{2}}{\nu^{2}}\Big)J_{2}}
=\frac{\beta}{\alpha^{2}+\omega^{2}}=\frac{1}{\omega^{2}J_{0}-\alpha\frac{\omega^{2}}{\nu}J_{1}-\frac{\omega^{2}}{2}\Big(k^{2}+\frac{\omega^{2}}{\nu^{2}}
\Big)J_{2}},
\end{align}
which gives
\begin{align} \label{eq4.15}
\alpha J_{0}+\frac{\omega^{2}}{\nu}J_{1}-\frac{\alpha}{2}\Big(k^{2}+\frac{\omega^{2}}{\nu^{2}}\Big)J_{2}
=\tau\omega^{2}J_{0}-\tau\alpha\frac{\omega^{2}}{\nu}J_{1}-\tau\frac{\omega^{2}}{2}\Big(k^{2}+\frac{\omega^{2}}{\nu^{2}}\Big)J_{2}.
\end{align}
After substituting \eqref{num} into \eqref{eq20}, the convergent improper integrals $J_{n}$ $(n=0,1,2)$ are 	explicitly calculated as
\begin{align} \label{eq4.1800}
\begin{split}
&J_{0}=-a_{i}+a_{e},\\[1.0mm]
&J_{1}=\frac{ra_{e}-a_{i}}{r},\\[1.0mm]
&J_{2}=\frac{2(r^{2}a_{e}-a_{i})}{r^{2}}.
\end{split}
\end{align}
Substituting \eqref{eq4.1800} into \eqref{eq4.15}, we obtain
\begin{align} \label{Turing1}
&\frac{\tau}{\nu^2}\Bigg(\frac{r^2a_e-a_i}{r^2}\Bigg)\omega^4+\Bigg[
\tau(a_i-a_e)+\Big(\tau k^2-\frac{\alpha}{\nu^2}\Big)\Big(\frac{r^2a_e-a_i}{r^2}\Big)+\Big(\frac{\alpha\tau+1}{\nu}\Big)\Big(\frac{ra_e-a_i}{r}\Big)\Bigg]\omega^2\nonumber\\
&+\alpha\Bigg((a_e-a_i)-k^2\Big(\frac{r^2a_e-a_i}{r^2}\Big)\Bigg)=0.
\end{align}
Thus, \eqref{Turing1} represents the Turing-Hopf bifurcation in the parameter 
space ($\alpha,\nu,\tau, r, a_e, a_i$) for some $k\neq$ and $\omega\neq0$.
We use \eqref{Turing1} to obtain the results presented in Figs.\:\ref{dispersion}, \ref{turing}, and \ref{turing2a}. 

In Fig.\:\ref{dispersion}, we show a dispersion relation (i.e., a $k$--$\omega$ curve) of the neural field for a particular set of values of the other parameters, i.e., $\alpha=5.0,\nu=1.0,\tau=0.75,r=5.0, a_e=10.0,$ and $ a_i=a_e/r=2.0$.
\begin{figure}[]
\centering
 \includegraphics[width=8.0cm,height=5.0cm]{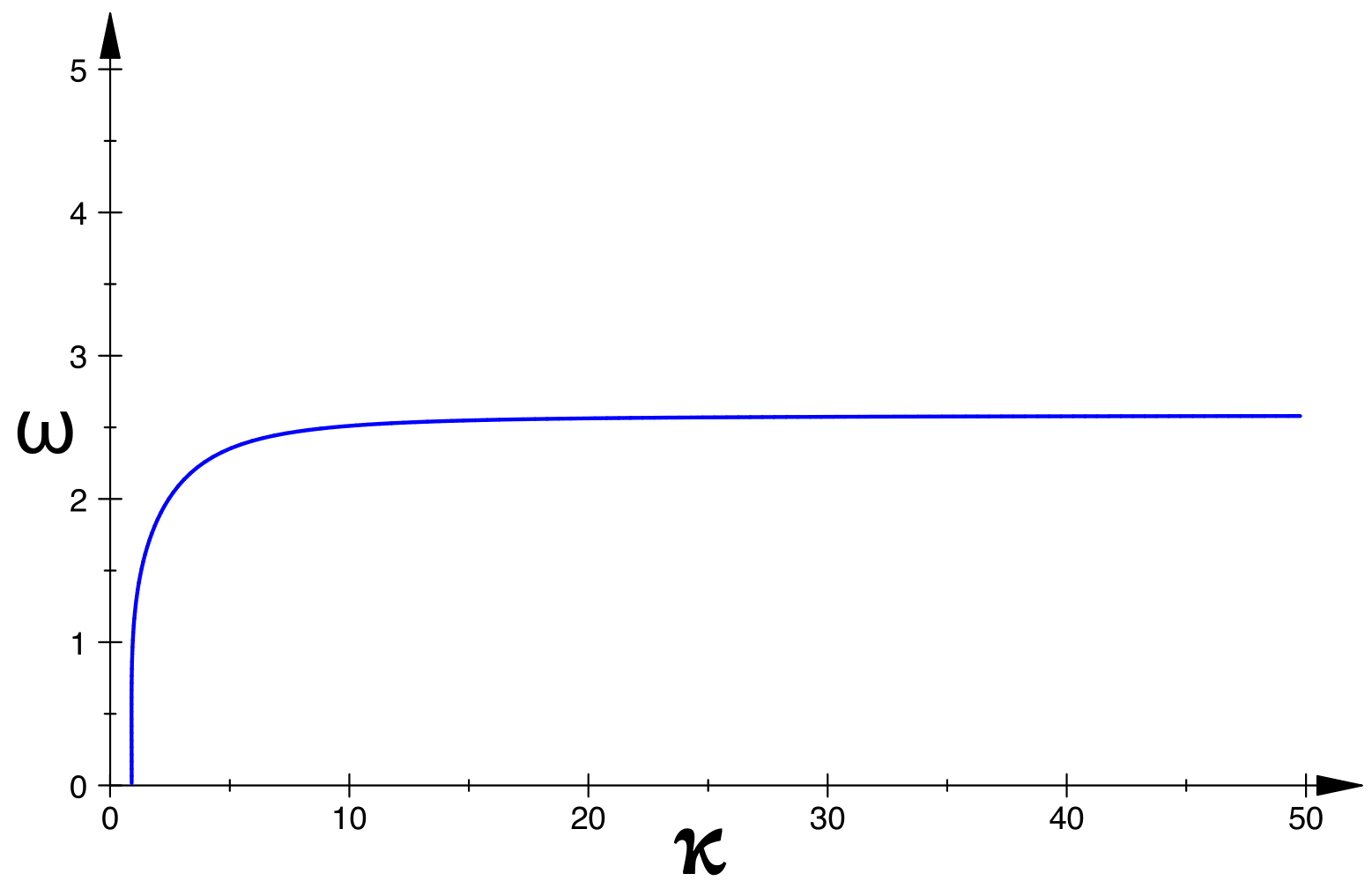}
\caption{A dispersion relation of the neural field satisfying the Turing-Hopf bifurcation equation given by \eqref{Turing1} for a fixed set of parameters values: $\alpha=5.0$, $\nu=1.0$, $\tau=0.75$, $r=5.0$, $a_{e}=10.0$, $a_i=a_e/r=2.0$.
}
\label{dispersion}
\end{figure}
In Fig.\:\ref{turing}\textbf{(a)}--\textbf{(d)}, we respectively show the Turing-Hopf bifurcation curves in different parameter spaces: 
($\alpha$-$\omega$), ($\nu$-$\omega$), ($\tau$-$\omega$), and ($r$-$\omega$) for a fixed value of the spatial mode $k=25.0$.
We note from Fig.\:\ref{turing}\textbf{(a)} and \textbf{(c)} that the memory decay $\alpha$ and the leakage parameter $\tau$ have opposite effects. The former increases the frequency $\omega$ of the oscillations, whereas the latter decreases it.  From Fig.\:\ref{turing}\textbf{(b)}, the transmission speed $\nu$ has a nonmonotonic influence, with a minimum for $\omega$ at a particular value of $\nu$. Similarly, Fig.\:\ref{turing}\textbf{(d)}, the ratio of the excitatory and inhibitory synaptic weights $r$ has a nonmonotonic influence on $\omega$. However, when the excitatory and inhibitory synaptic weights are balanced, i.e., when $r=a_e/a_i=1$, \eqref{Turing1} has many trivial solutions, that is, there exist infinitely many $\omega$ values that satisfy \eqref{Turing1}. This explains the vertical line in Fig.\:\ref{turing}\textbf{(d)} and Fig.\:\ref{turing2a}\textbf{(d)} at $r=1$.
\begin{figure}[]
\centering
\textbf{(a)}\includegraphics[width=7.0cm,height=4.5cm]{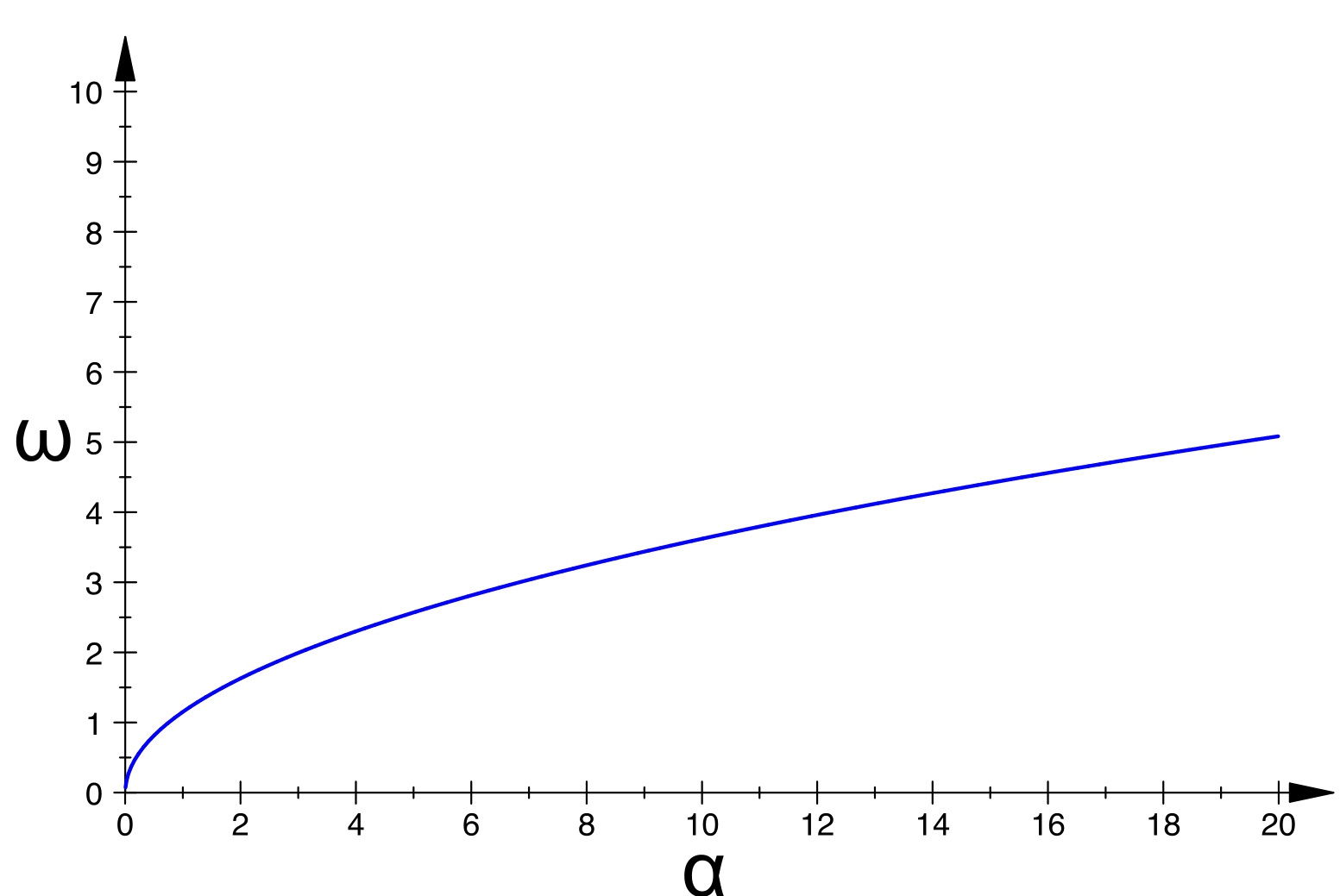}\textbf{(b)}\includegraphics[width=7.0cm,height=4.5cm]{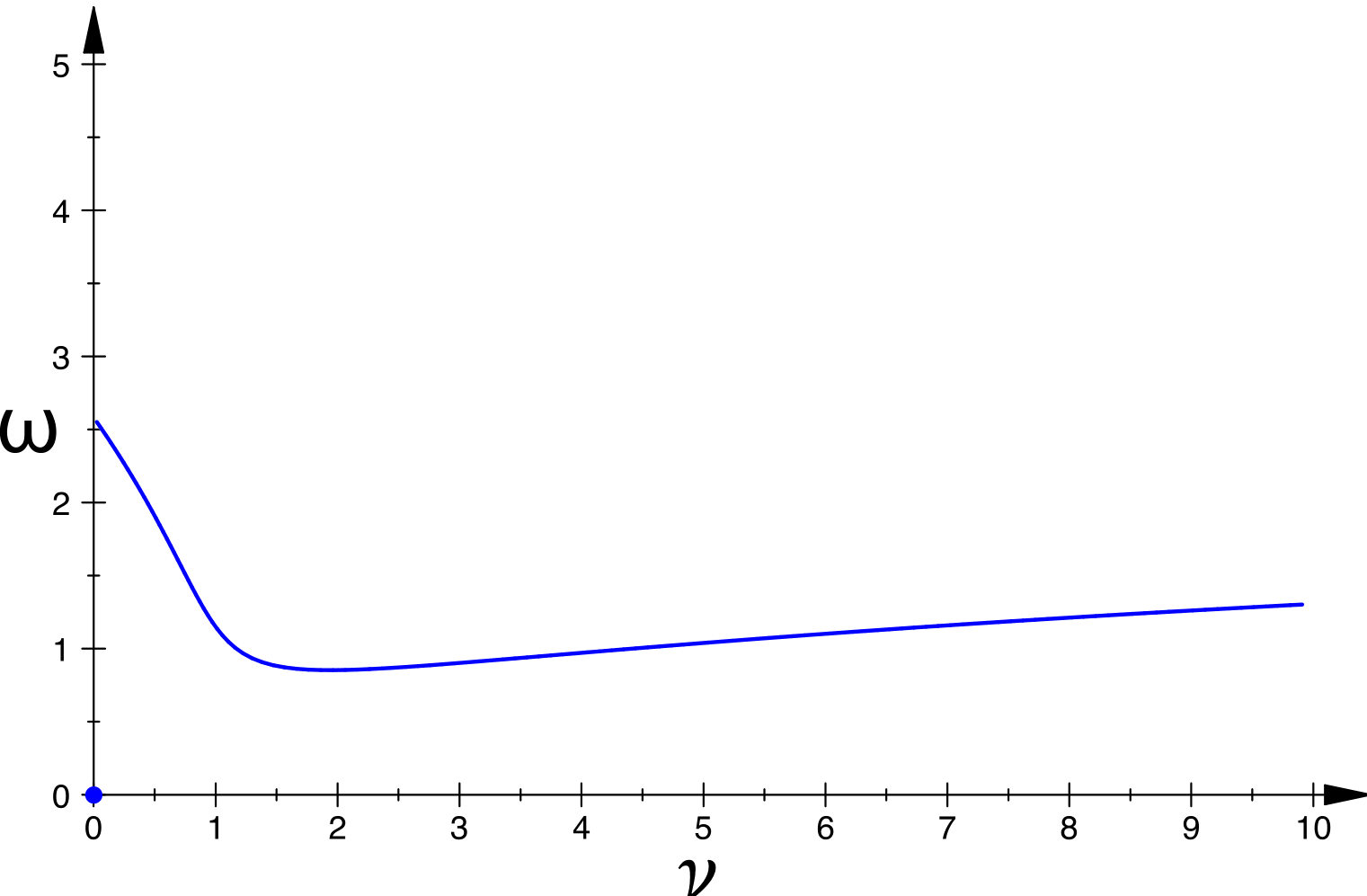}\\
\textbf{(c)}\includegraphics[width=7.0cm,height=4.5cm]{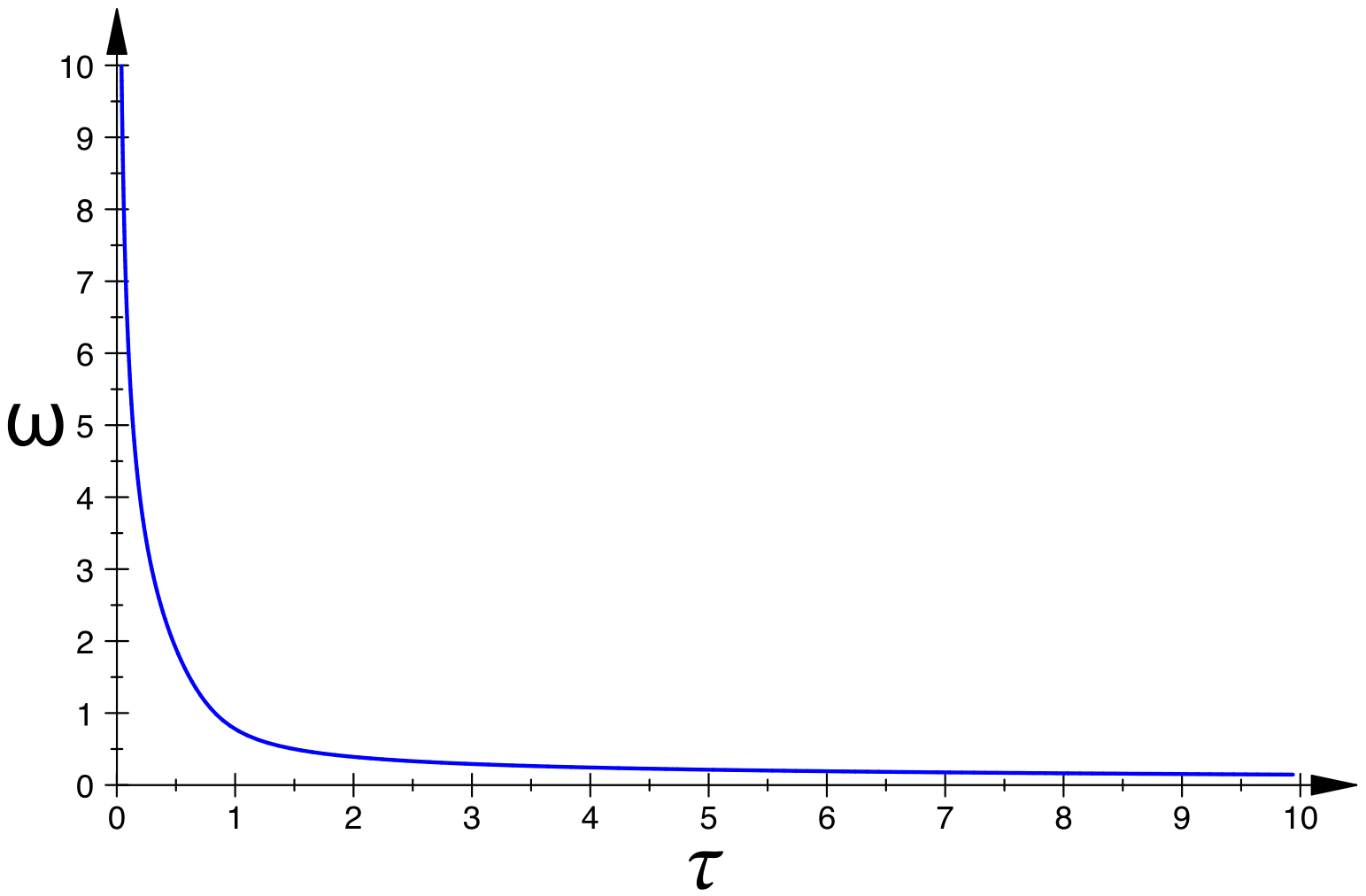}\textbf{(d)}\includegraphics[width=7.0cm,height=4.5cm]{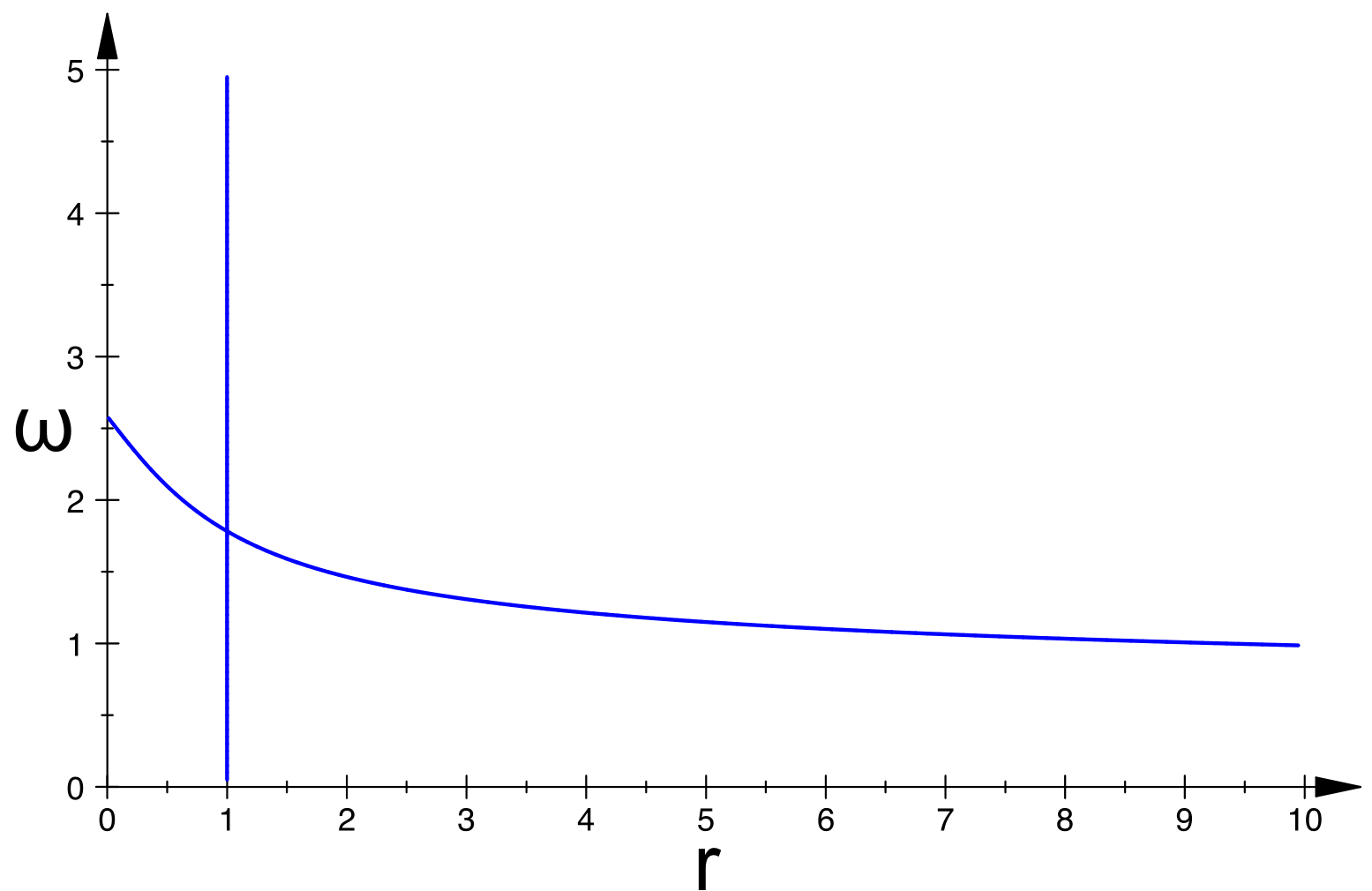}\\
\caption{The curves in panels \textbf{(a)}-\textbf{(d)}
represent the Turing-Hopf bifurcation curves in \eqref{Turing1} in
the ($\alpha$-$\omega$), ($\nu$-$\omega$), ($\tau$-$\omega$), and ($r$-$\omega$) planes, respectively. 
Parameter values are: \textbf{(a)} $k=25.0$, $\nu=1.0$, $\tau=0.75$, $r=5.0$; \textbf{(b)} $k=1.0$, $\alpha=5.0$, $\tau=0.75$,  $r=5.0$;
\textbf{(c)} $k=1.0$, $\alpha=5.0$, $\nu=1.0$, $r=5.0$; \textbf{(d)} $k=1.0$, $\alpha=5.0$, $\nu=1.0$, $\tau=0.75$. 
The remaining parameters are fixed at $a_{e}=10.0$, $a_i=a_e/r$.} \label{turing}
\end{figure}

In Fig.\:\ref{fig3}, we display the space-time patterns in three distinct regions 
of the Turing-Hopf bifurcation curve of Fig.\:\ref{turing}\textbf{(a)}, for example. Here, one can see, as expected, 
that the Turing-Hopf bifurcation leads to spatially and temporally non-constant solutions. 
With values fixed at $k=25.0$, $\nu=1.0$, $\tau=0.75$, $r=5.0$ $a_e=10.0$, and $a_i=a_e/r=2.0$, we chose a value for the exponential temporal kernel $\alpha$ and then calculate the corresponding temporal mode $\omega$ from \eqref{Turing1}, such that both values ($\alpha,\omega$) lie 
\textit{above} (as in Fig.\:\ref{fig3}\textbf{(a)}), \textit{on} (as in Fig.\:\ref{fig3}\textbf{(b)}), and \textit{below} (as in Fig.\:\ref{fig3}\textbf{(c)}) the Turing-Hopf bifurcation curve in Fig.\:\ref{turing}\textbf{(a)}. Comparing the patterns in Fig.\:\ref{fig3}\textbf{(a)}, \textbf{(b)}, and \textbf{(c)}, one can see that for a total time interval of 10 units, 
there is a change in the number of temporal oscillations, while the number of spatial oscillations does not change (because, of course, the spatial mode is fixed at $k=25.0$).
In Fig.\:\ref{fig3}\textbf{(a)}, with the values of $\omega$ and $\alpha$ lying \textit{above} the Turing-Hopf bifurcation curve, 
the neural field admits temporal oscillations with a relatively high frequency (12 oscillations per 10 units of time, i.e., $1.2$ hertz (Hz)).
In Fig.\:\ref{fig3}\textbf{(b)} with $\omega$ and $\alpha$ lying  \textit{on} the Turing-Hopf bifurcation curve, 
the frequency is reduced to 0.6 Hz, and in Fig.\:\ref{fig3}\textbf{(c)}, with 
$\omega$ and $\alpha$ lying \textit{below} the Turing-Hopf bifurcation curve, the frequency is further reduced to 0.2 Hz.
\begin{figure}[H]
\centering
 \textbf{(a)}\includegraphics[width=7.0cm,height=5.0cm]{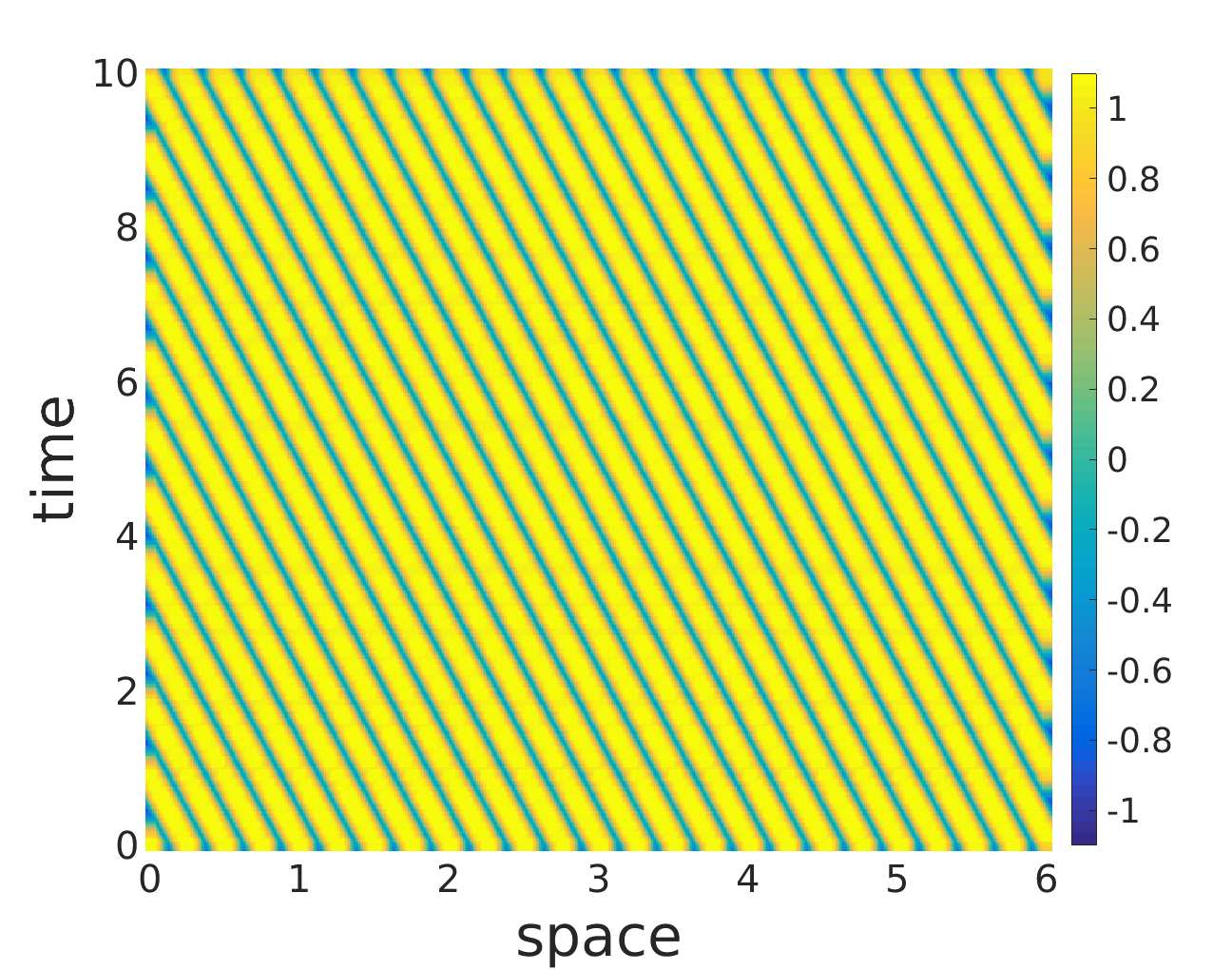}
 \textbf{(b)}\includegraphics[width=7.0cm,height=5.0cm]{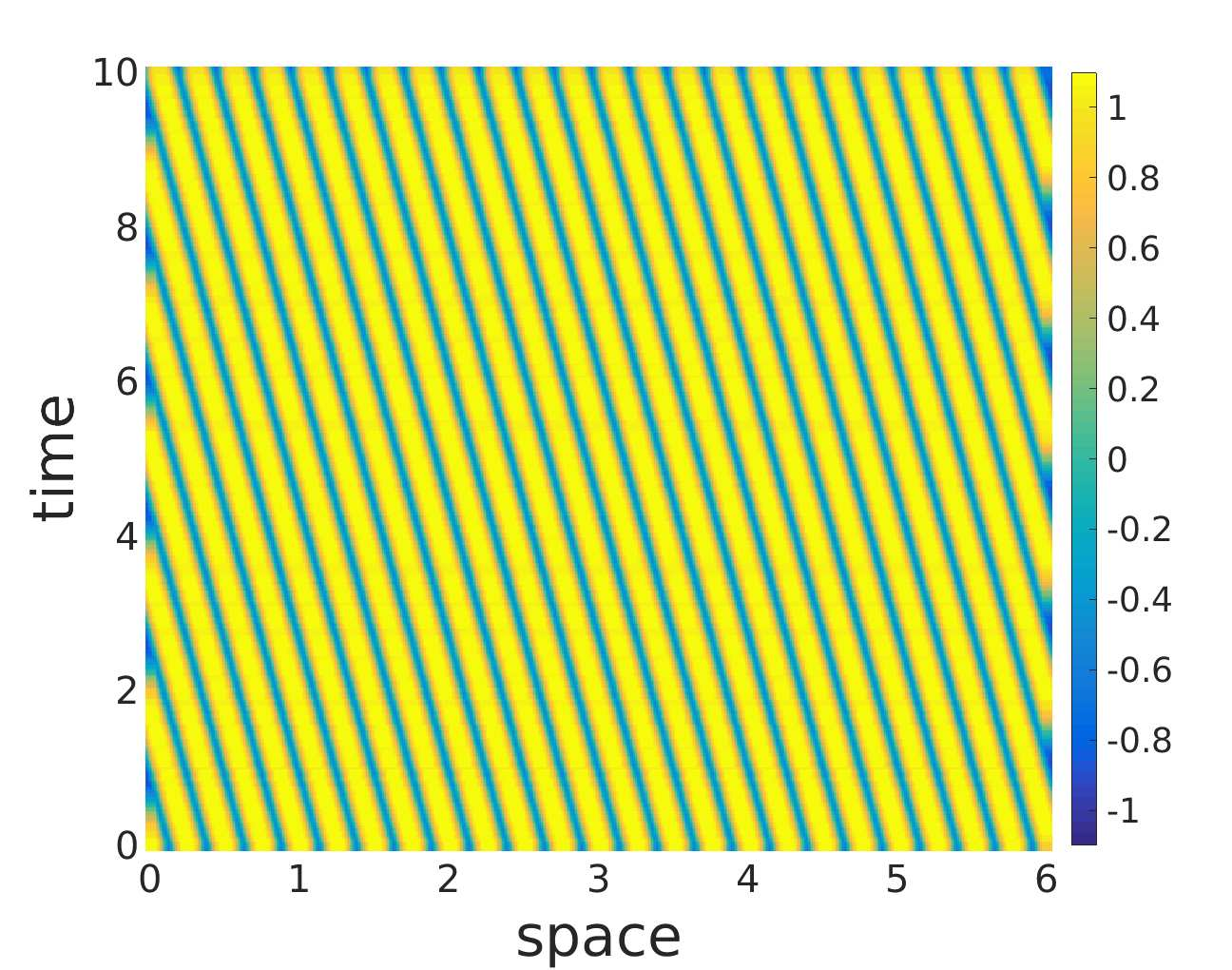}\\
 \textbf{(c)}\includegraphics[width=7.0cm,height=5.0cm]{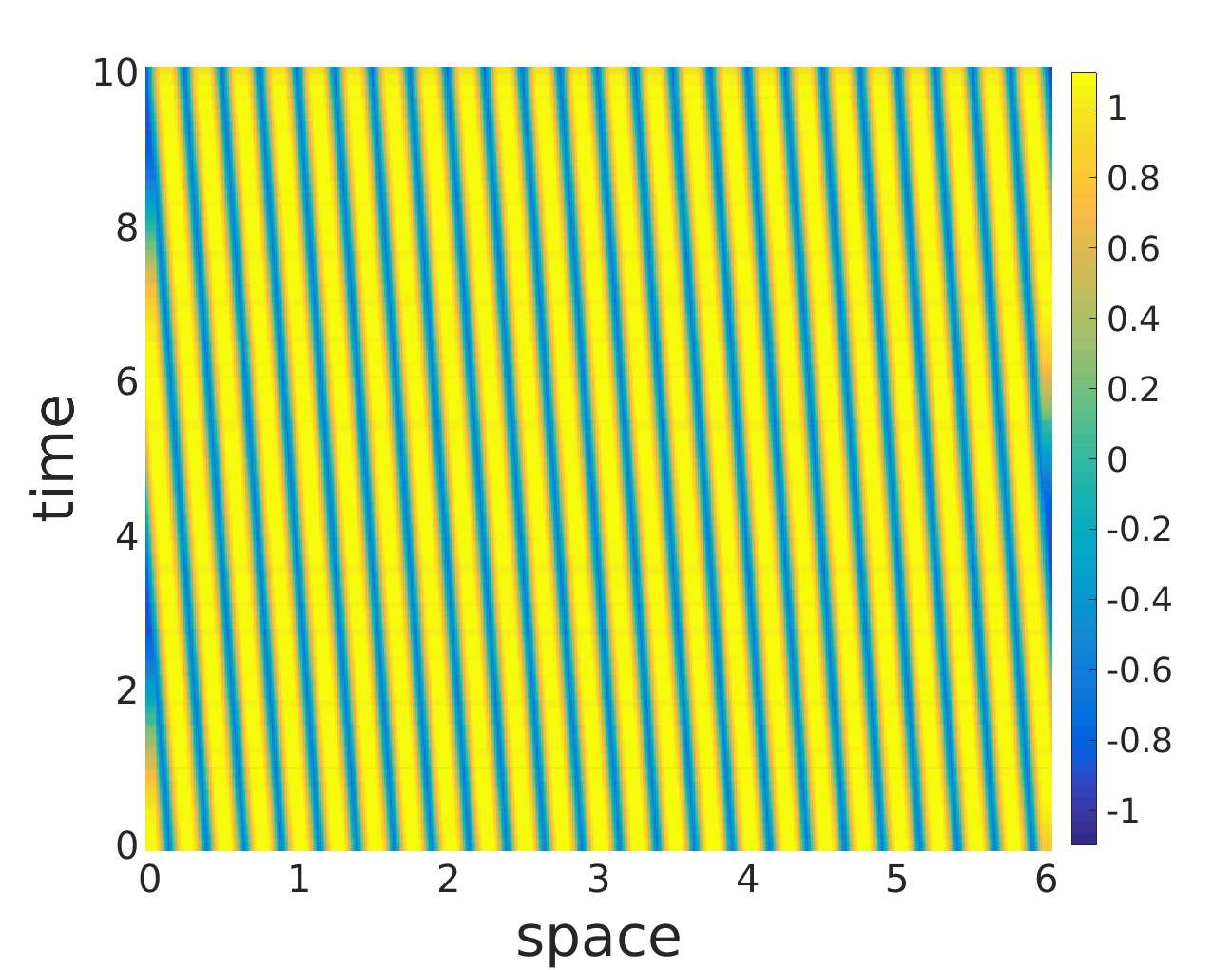}
\caption{Panels \textbf{(a)}-\textbf{(c)} show color-coded space-time patterns
  of the  membrane potential $v(x,t)$ emerging from a Turing-Hopf instability, 
leading to periodic oscillations of spatially and temporally non-constant solutions, 
obtained with the Gaussian connectivity kernel. Initial conditions are chosen
randomly from a uniform distribution on $[v_0-0.1, v_0+0.1]$. In panel \textbf{(a)}, $(\alpha,\omega)=(10.0,1.0)$ lies \textit{above} the 
Turing-Hopf bifurcation curve of Fig.\:\ref{turing}\textbf{(a)}. In \textbf{(b)}, $(\alpha,\omega)=(10.0,3.62)$ lies \textit{on} this 
Turing-Hopf bifurcation curve, and  in \textbf{(c)} $(\alpha,\omega)=(10.0,7.0)$ lies \textit{below} the curve. We observe a decrease in the frequency of the temporal oscillations of Turing-Hopf patterns from panel \textbf{(a)} to \textbf{(c)}, and a constant frequency in the 
spatial oscillations. Parameter values are: $a_e=10.0$, $r=5.0$, $a_i=a_e/r=2.0$, $\nu=1.0$, $\tau=0.75$, $k=25.0$.} \label{fig3}
\end{figure}

In Fig.\:\ref{turing2a}\textbf{(a)}-\textbf{(d)}, we show the Turing-Hopf bifurcation curves in different parameter spaces: 
($\alpha$-$k$), ($\nu$-$k$), ($\tau$-$k$), and ($r$-$k$), respectively, for a fixed value of the temporal mode $\omega=0.1$. We should contrast these relations with those of Fig.\:\ref{turing}. In effect, the dependence of the temporal and the spatial frequency values at the bifurcation on those other parameters is essentially the opposite.
\begin{figure}[]
\centering
 \textbf{(a)}\includegraphics[width=7.0cm,height=4.5cm]{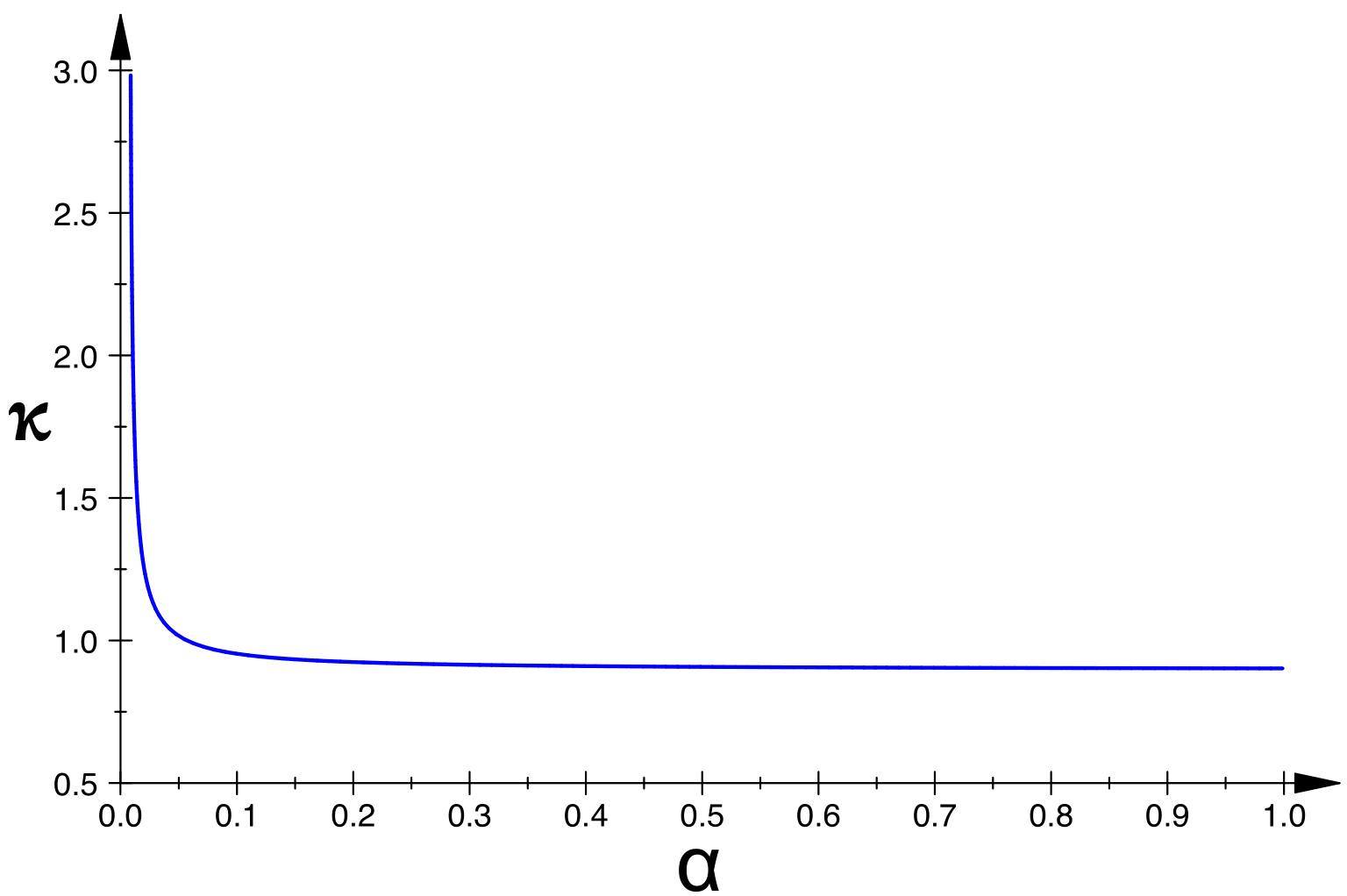}\textbf{(b)}\includegraphics[width=7.0cm,height=4.5cm]{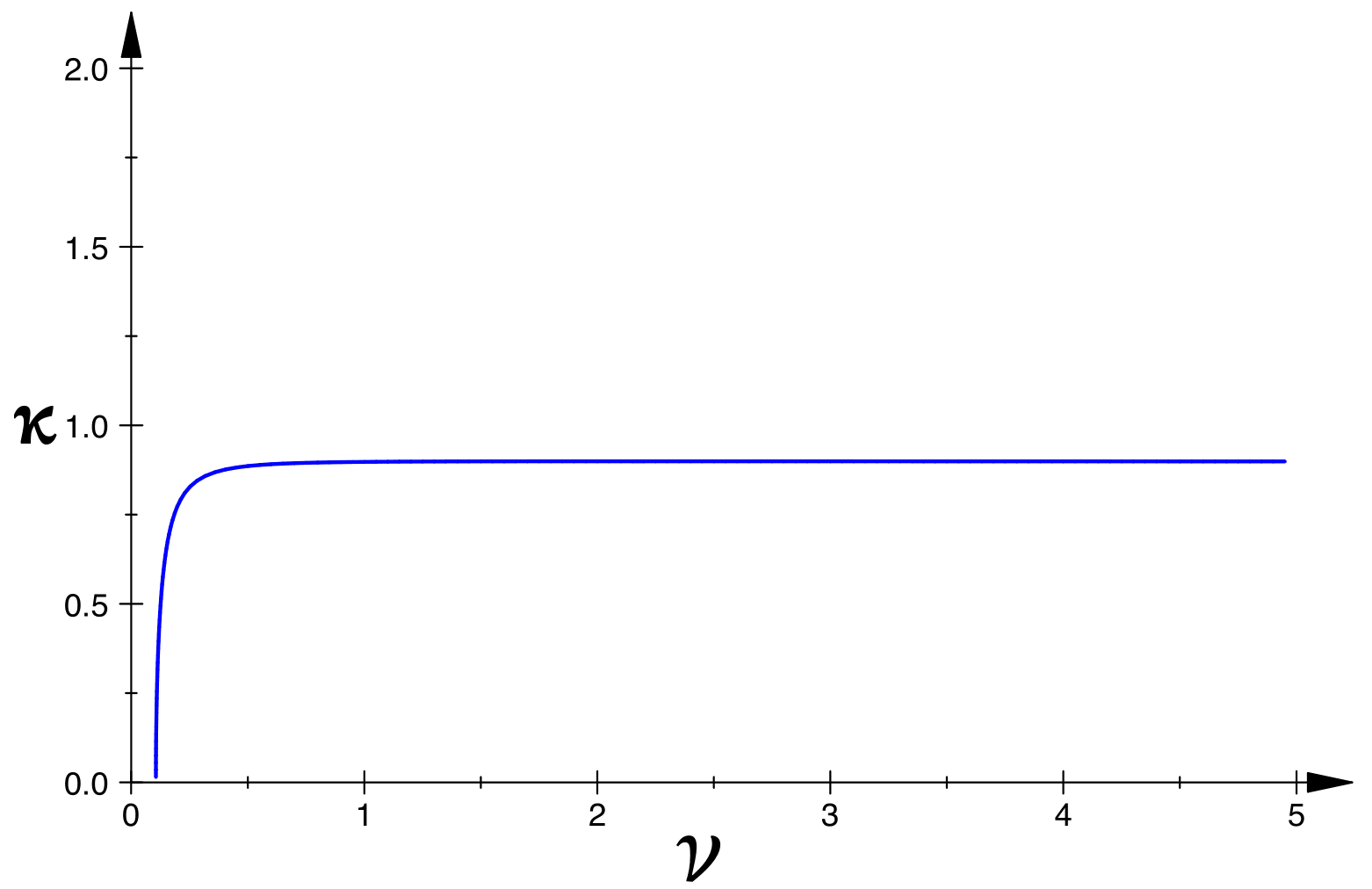}\\
 \textbf{(c)}\includegraphics[width=7.0cm,height=4.5cm]{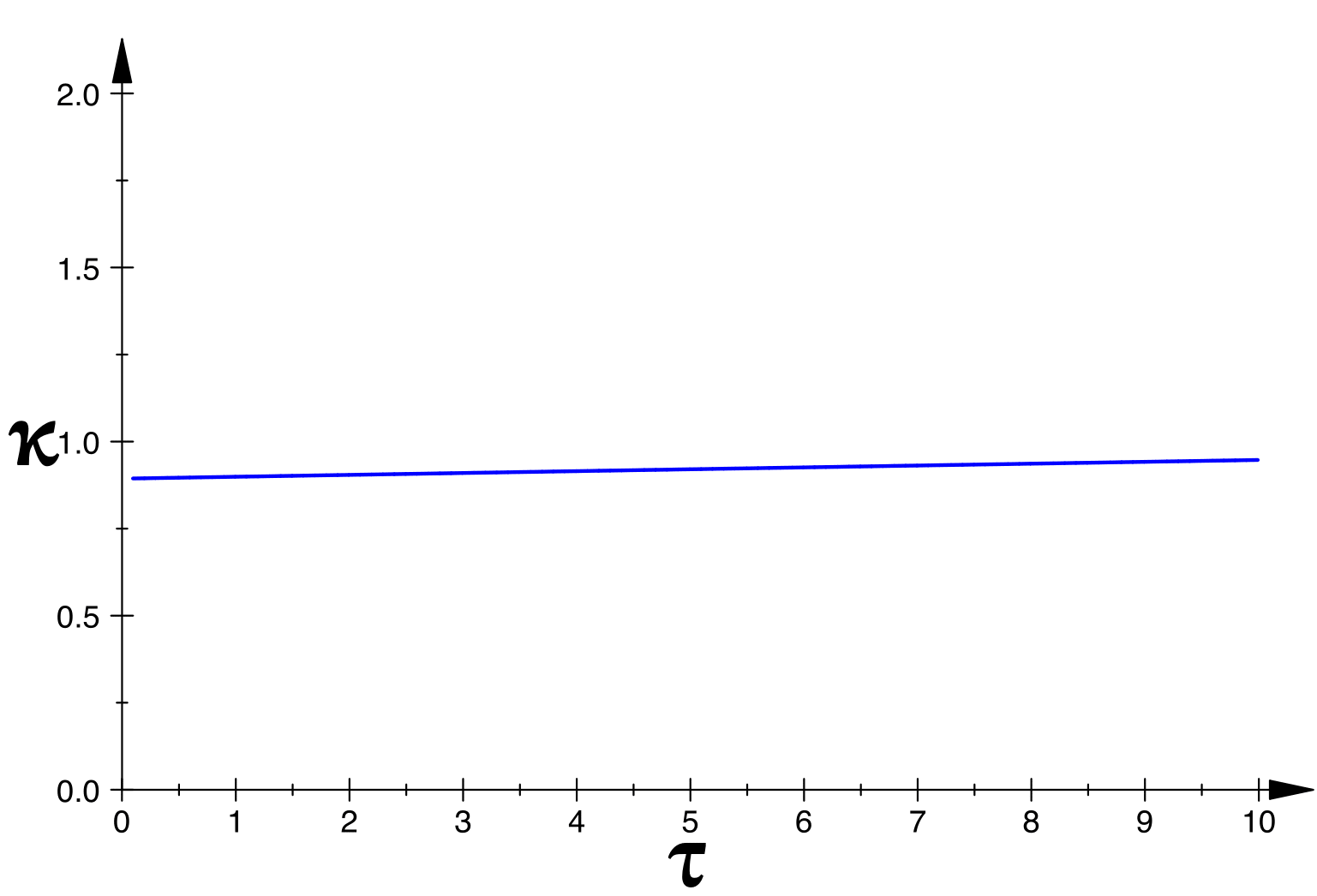}\textbf{(d)}\includegraphics[width=7.0cm,height=4.5cm]{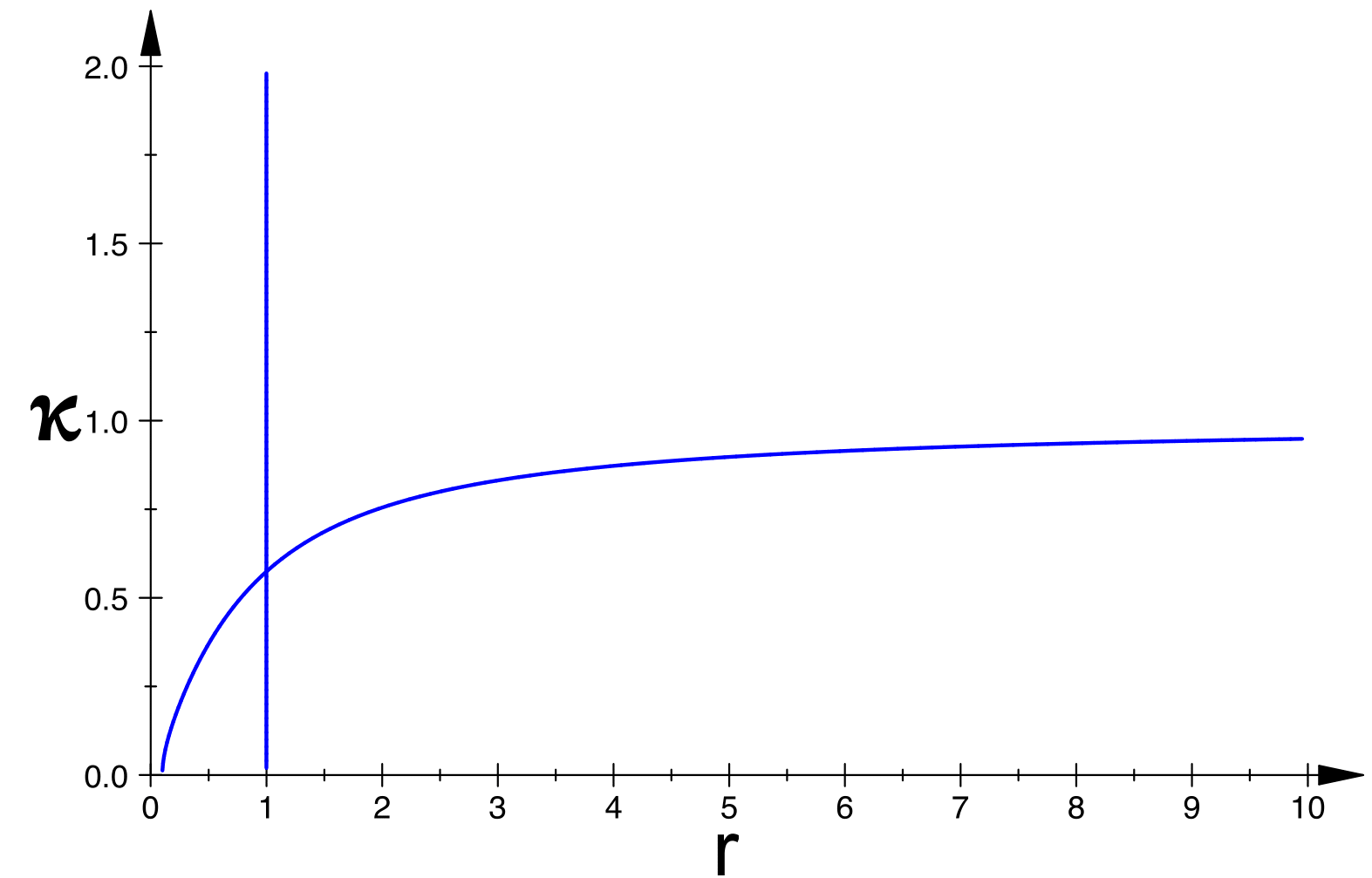}
\caption{The curves in panels \textbf{(a)}-\textbf{(d)}
represent the Turing-Hopf bifurcation curves in \eqref{Turing1} in
the ($\alpha$-$k$), ($\nu$-$k$), ($\tau$-$k$), and ($r$-$k$) planes, respectively. 
Parameter values are: \textbf{(a)} $\omega=0.1$, $\nu=1.0$, $\tau=0.75$, $r=5.0$; \textbf{(b)} $\omega=0.1$, $\alpha=5.0$, $\tau=0.75$,  $r=5.0$;
\textbf{(c)} $\omega=0.1$, $\alpha=5.0$, $\nu=1.0$, $r=5.0$; \textbf{(d)} $\omega=0.1$, $\alpha=5.0$, $\nu=1.0$, $\tau=0.75$. 
The remaining parameters are fixed at $a_{e}=10.0$, $a_i=a_e/r$.} \label{turing2a}
\end{figure}

In Fig.\:\ref{turing2b}, we display the space-time patterns in three distinct regions 
of the Turing-Hopf bifurcation curve of Fig.\:\ref{turing2a}\textbf{(a)}, for example. One can also see, 
that the Turing-Hopf bifurcation leads to spatially and temporally non-constant solutions. 
Here, for the sake of comparison, we have also fixed the parameters to the same values given in Fig.\:\ref{fig3}, i.e., $\nu=1.0$, $\tau=0.75$, $r=5.0$ $a_e=10.0$, $a_i=a_e/r=2.0$, 
and temporal mode parameter is fixed at $\omega=0.1$. As in Fig.\:\ref{fig3}, the patterns in Fig.\:\ref{turing2b} are obtained with values of the spatial frequency $k$ and the exponential temporal kernel $\alpha$, where we choose $\alpha$ and then calculate the corresponding spatial frequency $k$ from \eqref{Turing1}, such that both values 
($\alpha,k$) lie \textit{above} (as in Fig.\:\ref{turing2b}\textbf{(a)}), \textit{on} (as in Fig.\:\ref{turing2b}\textbf{(b)}), and \textit{below} (as in Fig.\:\ref{turing2b}\textbf{(c)}) the Turing-Hopf bifurcation curve in Fig.\:\ref{turing2a}\textbf{(a)}.

Comparing the panels in Fig.\:\ref{turing2b}, one can see a change in the patterns already observed in Fig.\:\ref{fig3},
but in terms of the frequency of the spatial oscillations, for a total space interval of 50 units and a temporal mode fixed at $\omega=0.1$. 
In Fig.\:\ref{turing2b}\textbf{(a)}, $k$ and $\alpha$  which are \textit{above} the Turing-Hopf bifurcation curve in Fig.\:\ref{turing2a}\textbf{(a)},
and the neural field oscillates with a relatively high spatial frequency,  i.e., 0.26 Hz.
In Fig.\:\ref{turing2b}\textbf{(b)}, $k$ and $\alpha$ lie \textit{on} the Turing-Hopf bifurcation curve, 
and the spatial frequency of oscillation is reduced by 0.16 Hz. In Fig.\:\ref{turing2b}\textbf{(c)},
$k$ and $\alpha$ lie \textit{below} the Turing-Hopf bifurcation curve and spatial frequency is further reduced to 0.1 Hz. 
However, in terms of the wavelengths in both space and time, the space-time patterns of the Turing-Hopf bifurcation 
in Fig.\:\ref{fig3} and Fig.\:\ref{turing2b} are different: Fig.\:\ref{turing2b} shows fewer oscillations on larger space and longer time intervals than in Fig.\:\ref{fig3}.

\begin{figure}[H]
\centering
 \textbf{(a)}\includegraphics[width=7.0cm,height=5.0cm]{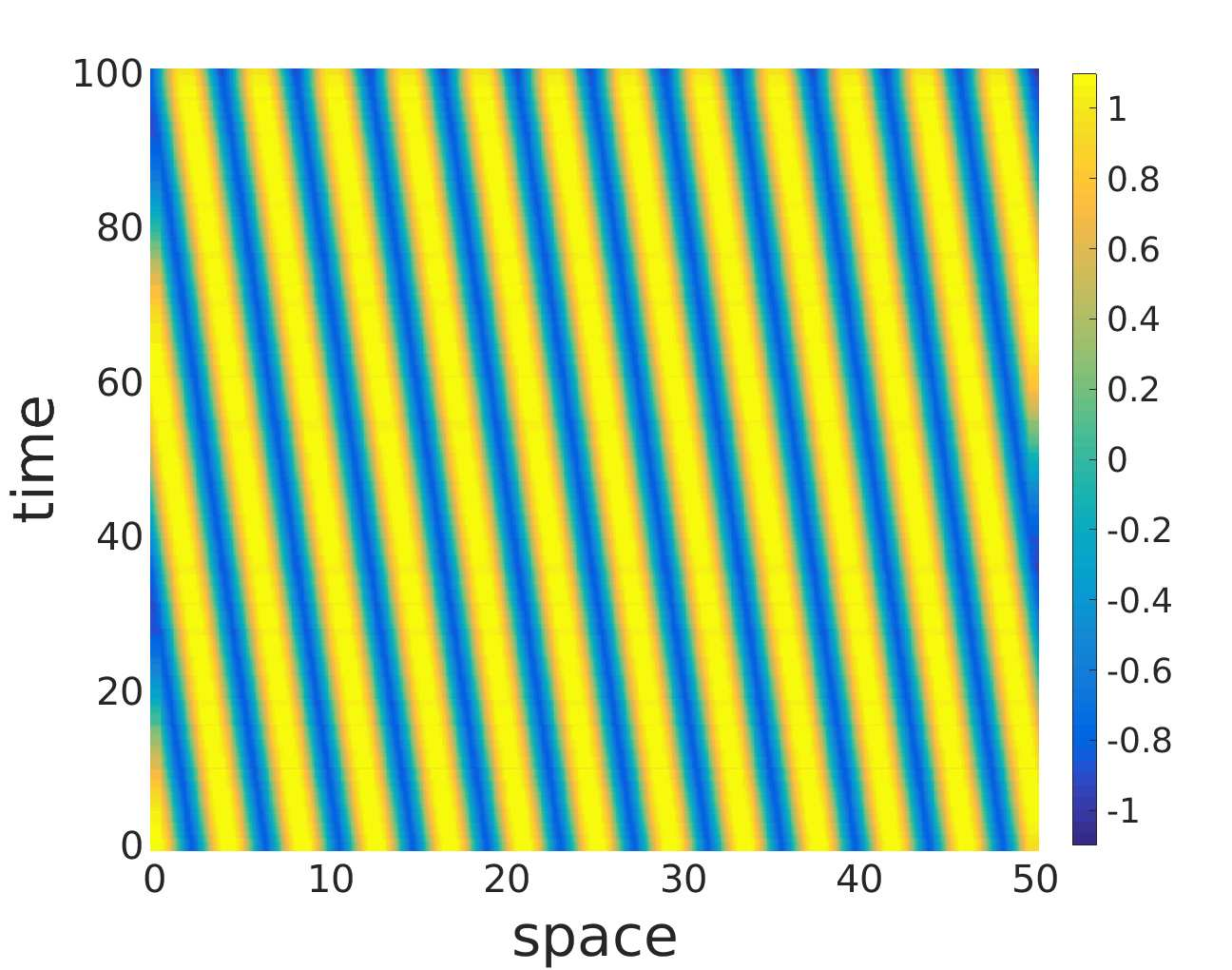}\textbf{(b)}\includegraphics[width=7.0cm,height=5.0cm]{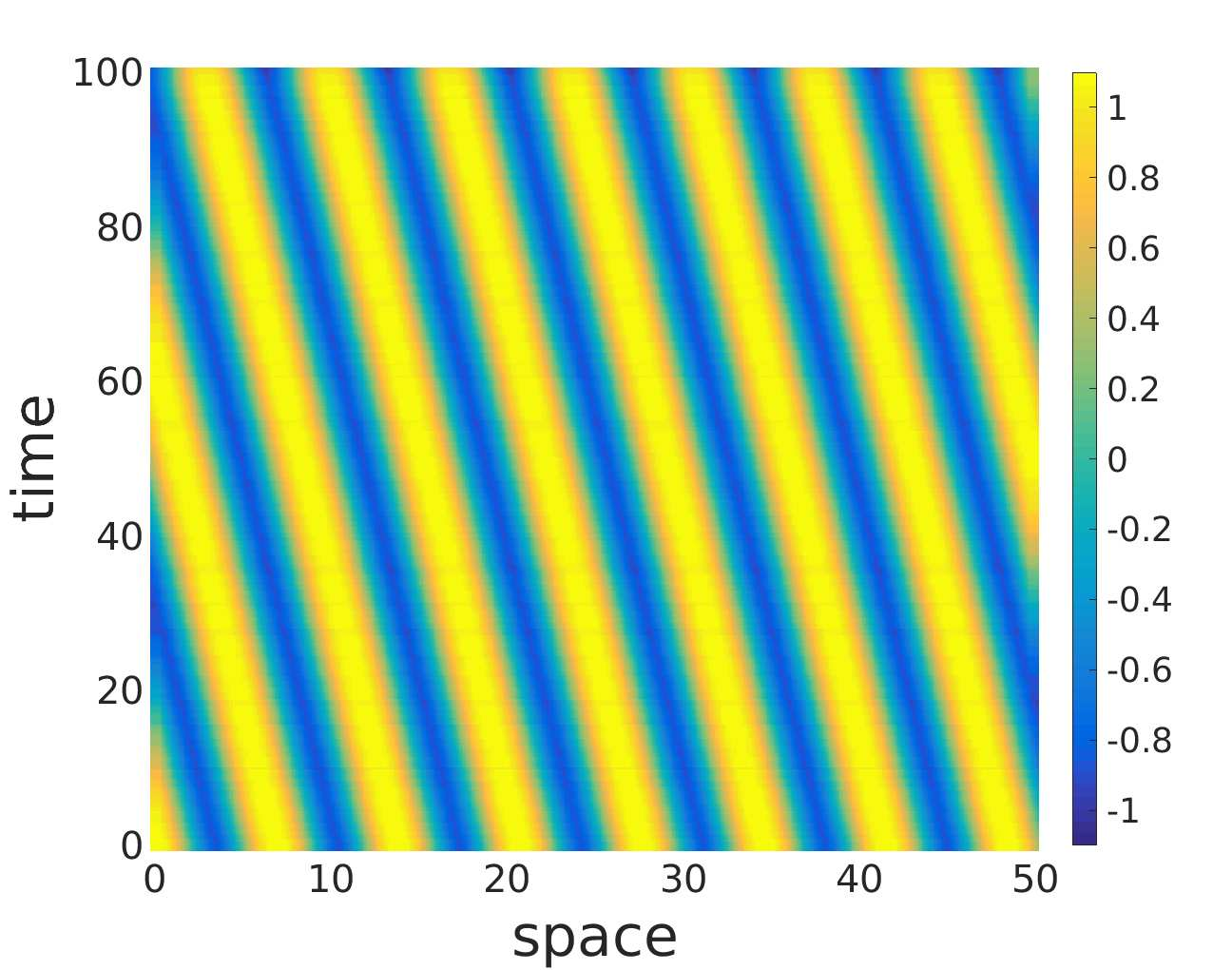}\\
 \textbf{(c)}\includegraphics[width=7.0cm,height=5.0cm]{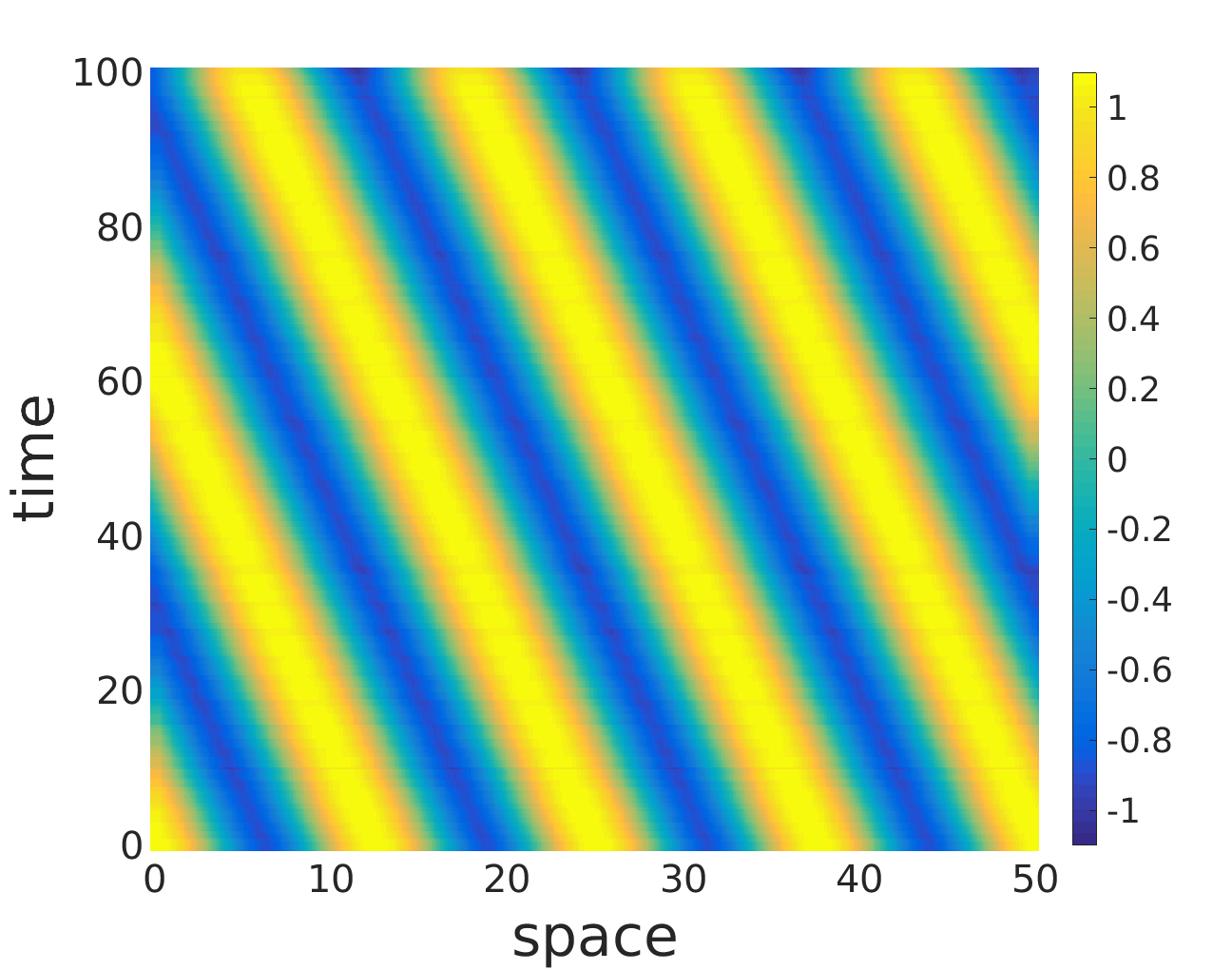}
\caption{Panels \textbf{(a)}-\textbf{(c)} show color coded space-time patterns of membrane potential $v(x,t)$ emerging from Turing-Hopf instability.
leading to periodic oscillations of spatially and temporally non-constant solutions obtained with the Gaussian connectivity kernel. Initial conditions are chosen
randomly from a uniform distribution on $[v_0-0.1, v_0+0.1]$. In panel \textbf{(a)}, $(\alpha,k)=(0.5,0.5)$ lies \textit{above} the 
Turing-Hopf bifurcation curve of Fig.\:\ref{turing2a}\textbf{(a)}. In \textbf{(b)}, $(\alpha,k)=(0.5,0.907)$ lies \textit{on} this 
Turing-Hopf bifurcation curve, and  $(\alpha,k)=(0.5,1.5)$ lies \textit{below} this curve. We observe a decrease in the 
frequency of the spatial oscillations of Turing-Hopf patterns from panel \textbf{(a)} to \textbf{(c)}, and a constant frequency in the 
temporal oscillations. Parameter values are: $a_e=10.0$, $r=5.0$, $a_i=a_e/r=2.0$, $\nu=1.0$, $\tau=0.75$, $\omega=0.1$.} \label{turing2b}
\end{figure}
%%%%%%%%%%%%%%%%%%%%%%%%%%%%%%%%%%%%%%%%%%%%%%%%%%%%%%%%%%%%%%%%%%%%%%%%%%%%%%%%%%%%%%%%%%%%%%%%%%%%%%%%%%%%%%%%%%%%%%%%%%%%%%%%%%%%%%%%%%%%%%%%%%%%%%%%%%%%%%%%%%
\section{Summary and concluding remarks}\label{section5}
In this paper, we have studied the bifurcation behavior and the wave patterns generated by 
a neural field equation with an exponential temporal kernel. The exponential 
temporal kernel in \eqref{eq00}  takes into account 
the finite memory of past activities of the neurons, which the Green's function utilized in \citep{atay2004stability} does not. 
Our first observation was that  static bifurcations, such 
as saddle-node and pitchfork, as well as static Turing patterns, are not possible with an exponential temporal kernel, because the characteristic polynomial does not have an eigenvalue 0.
This is in contrast to  \citep{atay2004stability}, 
where the temporal kernel was taken as the Green's function rather than an exponential function, and thus allowed zero eigenvalues.  In analyzing the dynamic bifurcations of the equilibrium solution, we have obtained the conditions 
for the occurrence of Hopf and Turing-Hopf bifurcations. Furthermore, we have numerically illustrated these dynamic bifurcations with bifurcation diagrams and space-time patterns. 

Neural fields by now are an old paradigm in computational neuroscience. They were intended to generate spatiotemporal patterns at a level above individual neurons that may possibly underlie cognitive behavior, or more precisely, support, at least in qualitative terms, neurophysiological models of 
cognition, like those of \citep{malsburg1981,abeles1982} and many subsequent ones. In order to support cognition, such patterns should be qualitatively diverse and temporally flexible. That is, the model should allow for rapid switches between different cognitive states. In terms of a dynamical model, such switches should occur as bifurcations, depending on parameters that can be readily tuned. This has motivated our study. And, of course, the models should include neurophysiologically plausible mechanisms. In that regard, we have assumed transmission delays and exponentially decaying memory, both of which are neurophysiologically well-supported. And with these assumptions, we could indeed produce qualitatively diverse and temporally flexible patterns, most notably Turing-Hopf bifurcations. These generate patterns that are non-constant in time and space. Periodic oscillations or patterns that propagate in time are at the basis of the synchronization model first advocated in \citep{malsburg1981} and at the synfire chain model of \citep{abeles1982}. Such dynamical modes should be easily triggered, but also easily terminated, and therefore require fine-tuning of a bifurcation parameter in the models.  On the other hand, there also needs to be spatial patterns so as to enable cognitive processing to make distinctions, as required, for instance, for feature binding, and as supported, for instance, by \citep{gray1989stimulus} and many subsequent studies.  Importantly, this should not depend on anatomical differences between brain regions, but such patterns should occur within specific regions. A simplifying, but in this context reasonable, assumption is a homogeneous field of neurons, as in neural field models, and the question then is to understand how such a field can generate behavior that is both temporally and spatially inhomogeneous, which model assumptions support this, and how this arises through bifurcations.  

It is worth pointing out that it is not necessarily the case that the results presented in our work can directly (i.e., without a further and detailed investigation that may even require completely different mathematical tools than those used here) be taken into account to interpret the bifurcation dynamics of any given modified neural mean field model. For example, a stochastic neural field equation \citep{touboul2012mean,faugeras2015stochastic,bressloff2019stochastic} with exponential temporal kernel and leakage term will almost surely behave differently and require stochastic analysis \citep{faugeras2015stochastic}, which is beyond the methods used in our deterministic neural mean field equation. Thus, a future and more comprehensive research study is necessary to analyze the stochastic static and dynamics bifurcations of a neural mean field equation with exponential temporal kernel and leakage term.  

\begin{acknowledgements}
This work was supported by the Max-Planck-Institut f\"{u}r Mathematik in den Naturwissenschaften, Leipzig, Germany.
\end{acknowledgements}

% 
% \bibliography{refs}{}
% \bibliographystyle{unsrtnat}
\end{document}